\documentclass[11pt, a4paper,reqno]{amsart}
\usepackage{amsmath, amsthm, amsfonts, amssymb}
\usepackage{a4wide}
\usepackage[latin1]{inputenc}
\usepackage{graphicx}

\usepackage{enumitem}

\DeclareMathOperator{\reg}{reg}
\DeclareMathOperator{\SL}{SL}

\DeclareMathOperator{\N}{\mathbb{N}}
\DeclareMathOperator{\Z}{\mathbb{Z}}
\DeclareMathOperator{\R}{\mathbb{R}}
\DeclareMathOperator{\C}{\mathbb{C}}
\DeclareMathOperator{\Q}{\mathbb{Q}}
\renewcommand{\H}{\mathbb{H}}
\DeclareMathOperator{\tr}{tr}
\DeclareMathOperator{\imag}{Im}
\DeclareMathOperator{\calQ}{\mathcal{Q}}
\DeclareMathOperator{\erf}{erf}
\DeclareMathOperator{\erfc}{erfc}

\DeclareMathOperator{\sgn}{sgn}
\DeclareMathOperator{\even}{e}
\DeclareMathOperator{\odd}{o}

\renewcommand{\pmod}[1]{\  \,  \left( \mathrm{mod} \,  #1 \right)}

\makeatletter
\newcommand*\bigcdot{\mathpalette\bigcdot@{.5}}
\newcommand*\bigcdot@[2]{\mathbin{\vcenter{\hbox{\scalebox{#2}{$\m@th#1\bullet$}}}}}
\makeatother

\numberwithin{equation}{section}
	\newtheorem{Satz}{Satz}[section]
	\newtheorem{theorem}[Satz]{Theorem}
	\newtheorem{lemma}[Satz]{Lemma}
	\newtheorem{proposition}[Satz]{Proposition}

	\theoremstyle{definition} 
	
	\newtheorem{example}[Satz]{Example}
	\newtheorem{remark}[Satz]{Remark}
\date{\today}

\author{Claudia Alfes-Neumann}

\address{Mathematical Institute, Paderborn University, Warburger Str. 100,
D-33098 Paderborn, Germany}
\email{alfes@math.uni-paderborn.de}

\address{Mathematical Institute, University of Cologne, Weyertal 86-90, D--50931 Cologne, Germany}

\author{Kathrin Bringmann}
\email{kbringma@math.uni-koeln.de}

\author{Markus Schwagenscheidt}
\email{mschwage@math.uni-koeln.de}

\title{On the rationality of cycle integrals of meromorphic modular forms}

%\subjclass[2000]{11F37, 11F27, 11G16}

\thanks{The research of the second author is supported by the Alfried Krupp Prize for Young University Teachers of the Krupp foundation and the second and third author are supported by the SFB-TRR 191 ``Symplectic Structures in Geometry, Algebra and Dynamics'', funded by the DFG}

\begin{document} 

\begin{abstract}
	We derive finite rational formulas for the traces of cycle integrals of certain meromorphic modular forms. Moreover, we prove the modularity of a completion of the generating function of such traces. The theoretical framework for these results is an extension of the Shintani theta lift to meromorphic modular forms of positive even weight. 
\end{abstract}

\maketitle

\section{Introduction and statement of results}
\label{sec:introduction}

\subsection{Rationality of traces of cycle integrals of meromorphic cusp forms}
 Let $k$ be a positive even integer. While considering the Doi-Naganuma lift, Zagier \cite{zagierrealquadratic} encountered the functions
\begin{align*}
 f_{k,d}(z) :=\frac{|d|^{\frac{k+1}{2}}}{\pi} \sum_{\mathcal{Q} \in \mathcal{Q}_{d}} Q(z,1)^{-k},
\end{align*}
where $\mathcal{Q}_{d}$ denotes the set of all integral binary quadratic forms $Q = [a,b,c]$ of discriminant $d = b^{2}-4ac$.
These are holomorphic cusp forms of weight $2k$ for $\Gamma:=\SL_{2}(\Z)$ if $d > 0$, and meromorphic cusp forms of weight $2k$ for $\Gamma$ if $d < 0$, i.e., they are meromorphic modular forms which decay like cusp forms towards $i\infty$.

Kohnen and Zagier \cite{kohnenzagierrationalperiods} showed that certain simple linear combinations of the cycle integrals
\[
\int_{c_{Q}}f_{k,d}(z)Q(z,1)^{k-1}dz
\]
of the cusp forms $f_{k,d}$ for $d > 0$ are rational. Here $c_{Q} := \Gamma_{Q}\backslash C_{Q}$ is the image in $\Gamma \backslash \H$ of the geodesic 
\[
C_{Q} := \left \{z \in \H: a|z|^{2} + bx + c = 0 \right \}\qquad (z=x+iy) 
\]
associated to $Q = [a,b,c]\in \mathcal{Q}_{D}$ with $D > 0$. Complementing these results, we present rational formulas for the traces
\begin{align*}
\tr_{f_{k,\mathcal{A}}}(D) := \sum_{Q \in \mathcal{Q}_{D}/\Gamma}\int_{c_{Q}}f_{k,\mathcal{A}}(z)Q(z,1)^{k-1}dz
\end{align*}
of cycle integrals of the refined functions
\begin{align}\label{def fkA}
f_{k,\mathcal{A}} (z) := \frac{|d|^{\frac{k+1}{2}}}{\pi}\sum_{Q \in \mathcal{A}}Q(z,1)^{-k}, 
\end{align}
where $\mathcal{A} \in \mathcal{Q}_{d}/\Gamma$ is a fixed equivalence class of quadratic forms of discriminant $d < 0$. The poles of $f_{k,\mathcal{A}}$ lie at the CM points $z_{Q}\in\H$ for $Q \in \mathcal{A}$, which are characterized by $Q(z_{Q},1) = 0$. We assume that they do not lie on any of the geodesics $C_{Q}$ for $Q \in \mathcal{Q}_{D}$. Let $z_{\mathcal{A}} := x_{\mathcal{A}} + iy_{\mathcal{A}} \in \H$ denote a fixed CM point $z_{Q}$ for some $Q \in \mathcal{A}$. We obtain the following rationality result for the traces of $f_{k,\mathcal{A}}$.

\begin{theorem}\label{corollary trace formulas}
	Let $F$ be a weakly holomorphic modular form of weight $\frac{3}{2}-k$ for $\Gamma_{0}(4)$ satisfying the Kohnen plus space condition. Suppose that the Fourier coefficients $a_{F}(-D)$ vanish for all $D > 0$ which are squares and that $a_F(-D)$ is rational for $D>0$. Moreover, assume that $z_{\mathcal{A}}$ does not lie on any of the geodesics $C_{Q}$ for $Q \in \mathcal{Q}_{D}$ for any $D>0$ for which $a_{F}(-D) \neq 0$. Then the linear combinations
	\[
	\sum_{D > 0}a_{F}(-D)\tr_{f_{k,\mathcal{A}}}(D) 
	\]
	are rational.
\end{theorem}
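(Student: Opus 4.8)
The plan is to recognize the generating function $\sum_{D>0}\tr_{f_{k,\mathcal{A}}}(D)q^{D}$ as, up to an explicit and elementary correction, the holomorphic part of the extended Shintani theta lift $\Phi(f_{k,\mathcal{A}},\tau)$ of $f_{k,\mathcal{A}}$, to pair this weight $k+\frac{1}{2}$ object against the weakly holomorphic form $F$ of Serre-dual weight $\frac{3}{2}-k$, and to evaluate the resulting pairing by Stokes' theorem; rationality will then come out of a residue computation at the CM point $z_{\mathcal{A}}$, once one tracks how the normalizing factor $|d|^{\frac{k+1}{2}}/\pi$ cancels the transcendental and quadratic-irrational contributions that arise.

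\textbf{Step 1.} From the extension of the Shintani lift to meromorphic modular forms developed in the body of the paper, the (regularized) theta integral $\Phi(f_{k,\mathcal{A}},\tau)$ is a harmonic Maass form of weight $k+\frac{1}{2}$ for $\Gamma_{0}(4)$ lying in the Kohnen plus space. Unfolding the theta kernel against the geodesic cycles $c_{Q}$ identifies its holomorphic Fourier coefficient of index $D>0$ with $\tr_{f_{k,\mathcal{A}}}(D)+r_{\mathcal{A}}(D)$, where the correction $r_{\mathcal{A}}(D)$ comes from the regularization around the pole of $f_{k,\mathcal{A}}$ at $z_{\mathcal{A}}$ and is given explicitly as an evaluation at $z_{\mathcal{A}}$ of a Fourier coefficient of the theta kernel; the principal part, constant term, and shadow of $\Phi(f_{k,\mathcal{A}})$ are likewise explicit and are governed entirely by the discriminant $d$ of the class $\mathcal{A}$. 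The hypotheses that $z_{\mathcal{A}}$ avoid every geodesic $C_{Q}$ with $a_{F}(-D)\neq 0$ and that $a_{F}(-D)=0$ for square $D$ are exactly what make the traces, the unfolding, and the lift well defined.

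\textbf{Step 2.} Apply the Bruinier--Funke pairing between $F$ (weakly holomorphic of weight $\frac{3}{2}-k=2-(k+\frac{1}{2})$) and $\Phi(f_{k,\mathcal{A}})$. On one side this pairing equals $\sum_{n}a_{F}(-n)c^{+}(n)$, with $c^{+}$ the holomorphic coefficients of $\Phi(f_{k,\mathcal{A}})$; since $F$ is weakly holomorphic and $a_{F}$ vanishes at squares and at the relevant boundary indices, this is a finite sum whose $D>0$ part is precisely $\sum_{D>0}a_{F}(-D)\bigl(\tr_{f_{k,\mathcal{A}}}(D)+r_{\mathcal{A}}(D)\bigr)$, the remaining terms pairing the finite, explicit principal part of $\Phi(f_{k,\mathcal{A}})$ against the regular coefficients of $F$. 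On the other side, writing $\Phi(f_{k,\mathcal{A}})$ as a theta integral and interchanging the order of integration (a see-saw identity for the theta kernel) turns the pairing into the regularized Petersson integral
\[
\int_{\Gamma\backslash\H}^{\mathrm{reg}} f_{k,\mathcal{A}}(z)\,\overline{\Phi^{*}(F)(z)}\,y^{2k}\,\frac{dx\,dy}{y^{2}},
\]
where $\Phi^{*}(F)$ is the theta lift of $F$, a meromorphic modular form of weight $2k$ for $\Gamma$ whose poles sit at the CM points determined by the principal part of $F$. Equating the two sides expresses $\sum_{D>0}a_{F}(-D)\tr_{f_{k,\mathcal{A}}}(D)$ as this regularized inner product minus a finite, explicit collection of correction and principal-part terms.

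\textbf{Step 3.} Evaluate the regularized Petersson integral by Stokes' theorem on the fundamental domain truncated at height $T$ and with small discs of radius $\varepsilon$ excised around the $\Gamma$-orbits of the finitely many CM-point poles of $f_{k,\mathcal{A}}$ and of $\Phi^{*}(F)$. Since both forms decay like cusp forms towards $i\infty$, the boundary term at height $T$ vanishes, and letting $\varepsilon\to 0$ collapses the integral to a finite sum of residues, each an evaluation of the holomorphic parts of the two forms together with finitely many of their derivatives at $z_{\mathcal{A}}$ and at the CM points of the discriminants occurring in $F$. These residues, like the sum $\sum_{D>0}a_{F}(-D)r_{\mathcal{A}}(D)$ and the principal-part terms from Step 2, are a priori only algebraic; the heart of the argument — and the step I expect to be the main obstacle — is to set up the regularized theta integral and the truncation carefully enough that every constant is accounted for, so that one sees (i) the factor $\pi$ and the powers of $\sqrt{|d|}$ arising from the residue and cycle-integral computations are cancelled exactly by the normalizing constant $|d|^{\frac{k+1}{2}}/\pi$ in the definition of $f_{k,\mathcal{A}}$, and (ii) the remaining algebraic quantities are polynomials in the coefficients of the forms $Q\in\mathcal{A}$ and $Q\in\mathcal{Q}_{D}$ and in the rational numbers $a_{F}(-D)$, whose Galois conjugates are permuted among themselves under the Galois-stable sums over $\mathcal{Q}_{D}/\Gamma$ and over $D$, so that the total is fixed by $\mathrm{Gal}(\overline{\Q}/\Q)$ and hence lies in $\Q$. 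The modularity of the completed generating function, the see-saw identity, and the Bruinier--Funke pairing invoked above are either established earlier in the paper or standard.
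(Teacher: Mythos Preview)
Your overall framework---Shintani lift of $f_{k,\mathcal{A}}$, pairing with $F$, Stokes' theorem---is in the right neighborhood, but there are two concrete gaps, and the second one is where the actual content of the proof lives.

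First, your description of the dual lift $\Phi^{*}(F)$ is wrong. The relevant second theta kernel $\Theta^{*}_{k}$ has weight $2-2k$ in $z$ and $k-\tfrac{3}{2}$ in $\tau$, so integrating $F$ (weight $\tfrac{3}{2}-k$) against it produces a weight $2-2k$ object in $z$, not a weight $2k$ meromorphic form. Moreover the principal part of $F$ lives at positive discriminants $D>0$, which correspond to geodesics, not CM points; so the singularities of $\Phi^{*}(F)$ are jump discontinuities along the net of geodesics $C_{Q}$, not poles at CM points. What actually appears is (a linear combination of) the \emph{locally harmonic Maass forms} $\mathcal{F}_{1-k,D}$ of Bringmann--Kane--Kohnen. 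The see-saw you want is also not a naive one: the paper passes through the differential identity $L_{\tau}\Phi_{k}(f_{k,\mathcal{A}},\tau)\doteq [R_{2-2k,z}^{k-1}\Theta^{*}_{k}(z,\tau)]_{z=z_{\mathcal{A}}}$, so that pairing with $F$ lands on $R_{2-2k}^{k-1}\mathcal{F}_{1-k,D}(z_{\mathcal{A}})$ rather than on a Petersson integral against a weight $2k$ form.

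Second, and more seriously, your Step~3 does not supply a mechanism for rationality. You hope that residues are ``a priori algebraic'' and then propose a Galois-invariance argument, but nothing in the setup guarantees algebraicity of the values that arise: the functions $\mathcal{F}_{1-k,D}$ decompose as a local polynomial \emph{plus} a non-holomorphic Eichler integral $f_{k,D}^{*}$ \emph{plus} a holomorphic Eichler integral $\mathcal{E}_{f_{k,D}}$, and the latter two pieces have transcendental values in general. The decisive input---never mentioned in your proposal---is that the weakly holomorphic assumption on $F$ forces $\sum_{D>0}a_{F}(-D)D^{\frac{k}{2}-1}f_{k,D}=0$ in $S_{2k}$, which kills \emph{both} Eichler-integral contributions simultaneously and leaves $\sum_{D}a_{F}(-D)D^{k-\frac12}\mathcal{F}_{1-k,D}(z)$ equal to an explicit local polynomial. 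Applying $R_{2-2k}^{k-1}$ to that polynomial and evaluating at $z_{\mathcal{A}}$ then gives a closed formula in terms of the constants $c_{k}(D)$ (rational by the functional equation and Bernoulli-number evaluation of $L_{D_{0}}$) and Legendre polynomials in $iQ_{z_{\mathcal{A}}}/\sqrt{D}$; rationality follows because $y_{\mathcal{A}}\in\sqrt{|d|}\,\Q$ makes each $Q_{z_{\mathcal{A}}}\in\sqrt{|d|}\,\Q$, and the factor $\sqrt{|d|}$ out front cancels the odd power of $y_{\mathcal{A}}^{-1}$. Without invoking the locally-polynomial structure of $\mathcal{F}_{1-k,D}$ and the vanishing of the Eichler pieces, your Galois argument has nothing concrete to act on.
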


We compute some numerical values of the above traces in Example~\ref{example trace formulas} below. Theorem~\ref{corollary trace formulas} follows from the following explicit formulas for the traces.

 \begin{theorem}\label{theorem trace formulas}
	Assume the hypotheses of Theorem~\ref{corollary trace formulas}. Then we have the formula
	\begin{align*}
	&\sum_{D > 0}a_{F}(-D)\tr_{f_{k,\mathcal{A}}}(D)\!=\! \frac{\sqrt{|d|}}{\,\left|\overline{\Gamma}_{z_{\mathcal{A}}}\right|}\sum_{D > 0}a_{F}(-D)\!\left(\frac{c_{k}(D)}{y_{\mathcal{A}}^{k-1}}+4\left(i\sqrt{D}\right)^{k-1}\!\!\!\!\!\!\sum_{\substack{Q =[a,b,c]\in \mathcal{Q}_{D} \\ Q_{z_{\mathcal{A}} > 0 > a}}}\!\!\!P_{k-1}\left(\frac{iQ_{z_{\mathcal{A}}}}{\sqrt{D}}\right) \right),
	\end{align*}
	where $P_{\ell}$ is the $\ell$-th Legendre polynomial, $|\overline{\Gamma}_{z_{\mathcal{A}}}|$ is the order of the stabilizer of $z_{\mathcal{A}}$ in $\overline{\Gamma} := \Gamma/\{\pm 1\}$, and
\begin{align*}
Q_{z} := \frac{1}{y}\left(a|z|^{2}+bx+c\right)
\end{align*}
	for $Q =[a,b,c]$. The constant $c_{k}(D)$ is given by
	\begin{align}\label{eq ckD}
	c_{k}(D) := \frac{D^{k-\frac{1}{2}}\zeta(k)}{2^{k-3}(2k-1)\zeta(2k)}L_{D_{0}}(k)\sum_{m \mid f}\mu(m)\left(\frac{D_{0}}{m} \right)m^{-k}\sigma_{1-2k}\left(\frac{f}{m}\right).
	\end{align}
Here we write  $D = D_{0}f^{2}$ with a fundamental discriminant $D_{0}$, $\mu$ is the Möbius function, $\sigma_{\kappa}(n) := \sum_{d|n} d^\kappa$ is the $\kappa$-th divisor sum, $\zeta$ is the Riemann zeta function, $( \frac{D_{0}}{\cdot})$ is the Kronecker symbol, and $L_{D_{0}}(s)$ is the associated Dirichlet $L$-function.  In particular, $c_k(D)$ is rational.
\end{theorem}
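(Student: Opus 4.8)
The plan is to prove Theorem~\ref{theorem trace formulas} by realizing the generating function $\sum_{D>0} a_F(-D)\tr_{f_{k,\mathcal{A}}}(D)$ as a regularized Shintani theta lift of the weakly holomorphic form $F$, evaluated against the meromorphic modular form $f_{k,\mathcal{A}}$, and then evaluating that lift explicitly. Concretely, one first shows that $\sum_{D>0} a_F(-D)\, \mathcal{Q}_D$-indexed sum of cycle integrals $\int_{c_Q} f_{k,\mathcal{A}}(z) Q(z,1)^{k-1}\,dz$ equals a regularized Petersson-type pairing $\langle f_{k,\mathcal{A}}, \Theta(F) \rangle^{\mathrm{reg}}$, where $\Theta$ is the Shintani-type theta lift sending weight $\tfrac32-k$ forms for $\Gamma_0(4)$ (in the Kohnen plus space) to weight $2k$ objects; this is the ``theoretical framework'' the abstract advertises, and I would expect it to be established in the body of the paper preceding this theorem. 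Since $f_{k,\mathcal{A}}$ is, up to the normalizing constant $|d|^{(k+1)/2}/\pi$, a sum of weight-$2k$ meromorphic Poincaré-type kernels $Q(z,1)^{-k}$ attached to the class $\mathcal{A}$, and since such kernels are (essentially) the weight-$2k$ meromorphic analogues of the functions whose theta lifts are classically understood, the pairing can be ``unfolded'' against the theta kernel.

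The key computational step is then to move the regularized integral past the theta kernel and collapse it onto the CM point $z_{\mathcal{A}}$. Here I would use the standard feature of meromorphic modular forms of the shape $Q(z,1)^{-k}$: pairing them against a sufficiently nice kernel picks out a residue-type contribution localized at the pole $z_Q = z_{\mathcal{A}}$ (for $Q\in\mathcal{A}$) together with a ``boundary'' or Eisenstein-type contribution. The residue contribution at $z_{\mathcal{A}}$ produces, after expanding the relevant Siegel theta kernel in a neighborhood of $z_{\mathcal{A}}$, exactly the Legendre-polynomial sum $4(i\sqrt D)^{k-1}\sum_{Q=[a,b,c]\in\mathcal{Q}_D,\ Q_{z_{\mathcal{A}}}>0>a} P_{k-1}(iQ_{z_{\mathcal{A}}}/\sqrt D)$, the factor $\sqrt{|d|}/|\overline{\Gamma}_{z_{\mathcal{A}}}|$ coming from the normalization of $f_{k,\mathcal{A}}$ and the order of the stabilizer appearing as the local automorphy factor at the elliptic point $z_{\mathcal{A}}$; the Legendre polynomial itself arises because the Gegenbauer/Legendre polynomials are the natural angular pieces of the weight-$2k$ Siegel theta kernel in the variable $iQ_z/\sqrt D$. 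The remaining term $c_k(D)/y_{\mathcal{A}}^{k-1}$ is the contribution of the constant term / Eisenstein part of $f_{k,\mathcal{A}}$ (or equivalently of the principal part of the theta lift), and identifying it requires a separate, more arithmetic computation.

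For the $c_k(D)$ term I would compute the relevant integral by inserting the standard Fourier expansion of the weight-$2k$ Shintani theta kernel and matching against known evaluations. The class number / genus theory input is what turns the naive $D$-dependence into the product $D^{k-1/2}\zeta(k)/(2^{k-3}(2k-1)\zeta(2k)) \cdot L_{D_0}(k)\sum_{m\mid f}\mu(m)(\tfrac{D_0}{m})m^{-k}\sigma_{1-2k}(f/m)$: writing $D=D_0 f^2$, the sum over $Q\in\mathcal{Q}_D/\Gamma$ of a $\Gamma$-invariant weight function at the fixed CM point decomposes via Gauss's reduction theory, and the resulting finite Euler-product-like correction factor $\sum_{m\mid f}\mu(m)(\tfrac{D_0}{m})m^{-k}\sigma_{1-2k}(f/m)$ is the standard ``$f$-part'' correction (cf. the coefficients of Cohen–Eisenstein series and of Zagier's weight-$\tfrac32$ forms). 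Rationality of $c_k(D)$ is then immediate: $L_{D_0}(k)/\zeta(2k)$ and $\zeta(k)/\zeta(2k)$ are rational by the Euler–Hirzebruch evaluation of $L$-values of even Dirichlet characters (both $k$ and $2k$ being positive even integers, with $D_0<0$ so $(\tfrac{D_0}{\cdot})$ is an even character and $k$ of the correct parity for nonvanishing), and the remaining factors are manifestly rational.

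The main obstacle I anticipate is the convergence and regularization bookkeeping around the pole: because $f_{k,\mathcal{A}}$ genuinely has poles at the $z_Q$ for $Q\in\mathcal{A}$, the theta integral does not converge absolutely and one must carefully justify the unfolding, the interchange of summation and (regularized) integration, and the extraction of the residue at $z_{\mathcal{A}}$ — in particular keeping track of the hypothesis that $z_{\mathcal{A}}$ avoids all geodesics $C_Q$ with $a_F(-D)\ne0$, which is exactly what guarantees that the cycle integrals are finite and that no extra boundary terms appear when the pole would otherwise meet the cycle. A secondary technical point is the combinatorial identity expressing the local expansion of the theta kernel at the elliptic point $z_{\mathcal{A}}$ in closed form via $P_{k-1}$, which amounts to recognizing a hypergeometric sum as a Legendre polynomial; this is routine but needs care with normalizations, signs, and the factor $(i\sqrt D)^{k-1}$.
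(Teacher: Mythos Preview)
Your proposal has the right general architecture --- regularized theta pairings, contributions localized at the CM point $z_{\mathcal{A}}$, an arithmetic constant term --- but it is missing the specific mechanism that makes the argument go through, and several of the concrete attributions you make are off.

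The paper does \emph{not} compute the traces by unfolding a single theta integral and reading off a residue at $z_{\mathcal{A}}$. Instead it passes through a second theta lift and the theory of \emph{locally harmonic Maass forms} $\mathcal{F}_{1-k,D}$ of weight $2-2k$ (from Bringmann--Kane--Kohnen). The chain is: (i) the Fourier expansion of the Shintani lift $\Phi_k(f_{k,\mathcal{A}},\tau)$ (Theorem~\ref{theorem fourier expansion}) identifies its holomorphic part with the generating series of the traces and its non-holomorphic part with $R_{2-2k}^{k-1}\theta^*_{1-k}$ evaluated at $z_{\mathcal{A}}$; (ii) applying $L_\tau$ kills the holomorphic part and yields $R_{2-2k}^{k-1}\Theta_k^*|_{z=z_{\mathcal{A}}}$; (iii) the Millson-type lift of the Poincar\'e series $P_{\frac{3}{2}-k,D}$ is a constant multiple of $\mathcal{F}_{1-k,D}$, so Stokes' theorem converts the pairing into $R_{2-2k}^{k-1}(\mathcal{F}_{1-k,D})(z_{\mathcal{A}})$; (iv) the linear combination $\sum_D a_F(-D)D^{k-\frac12}\mathcal{F}_{1-k,D}$ is \emph{locally a polynomial}, explicitly $-\tfrac{c_k(D)}{2^k\binom{2k-2}{k-1}}+2^{3-2k}\sum_{Q_{z}>0>a}Q(z,1)^{k-1}$, because the cusp-form pieces (Eichler integrals of $f_{k,D}$) cancel under the hypothesis on $F$; and (v) one applies $R_{2-2k}^{k-1}$ to this polynomial, using $R_{2-2k}^{k-1}(1)=(-1)^{k+1}(k-1)!\binom{2k-2}{k-1}y^{1-k}$ and $R_{2-2k}^{k-1}(Q(z,1)^{k-1})=(2i\sqrt{D})^{k-1}(k-1)!\,P_{k-1}(iQ_z/\sqrt{D})$.

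Several of your specific claims would not survive a careful write-up. The Legendre polynomial does not appear as an ``angular piece of the Siegel theta kernel'' but as the result of the iterated raising operator acting on $Q(z,1)^{k-1}$ (Lemma~\ref{lemma raising Q}). The constant $c_k(D)$ is not an ``Eisenstein part of $f_{k,\mathcal{A}}$'' --- $f_{k,\mathcal{A}}$ is a meromorphic cusp form with no Eisenstein component --- but the constant term of the local polynomial $\mathcal{P}_{1-k,D}$, computed already in \cite{bringmannkanekohnen}. The finite-sum condition $Q_{z_{\mathcal{A}}}>0>a$ does not drop out of a residue calculation; it is inherited from the jump-discontinuity structure of $\mathcal{F}_{1-k,D}$ along geodesics. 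Finally, in your rationality sketch you write $D_0<0$, but here $D>0$ so $D_0>0$; the paper's rationality argument instead goes through the functional equation and Bernoulli polynomials (equation~\eqref{eq rational formula ckD}). Without the locally harmonic Maass form $\mathcal{F}_{1-k,D}$ and its explicit local polynomial decomposition, your unfolding outline does not produce the stated formula.
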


\begin{remark}
	For $D > 0$ a non-square discriminant and $Q = [a,b,c] \in \mathcal{Q}_{D}$ the geodesic $C_{Q}$ is a semi-circle centered at the real line. The condition $Q_{z_{\mathcal{A}}} > 0 > a$ means that $C_{Q}$ is oriented clockwise and that $z_{\mathcal{A}}$ lies in the interior of the bounded component of $\H\setminus C_{Q}$. Since, for fixed non-square $D > 0$, every point $z \in \H$ lies in the interior of the bounded component of $\H \setminus C_{Q}$ for only finitely many $Q \in \mathcal{Q}_{D}$, the sum over $Q \in \mathcal{Q}_{D}$ in Theorem~\ref{theorem trace formulas} is finite. 
\end{remark}

The proof of Theorem~\ref{theorem trace formulas} relies on the Fourier expansion of a certain theta lift of the meromorphic modular forms $f_{k,\mathcal{A}}$, which is explained in Section~\ref{section shintani lift fourier expansion introduction}. An outline of the proof of Theorem~\ref{theorem trace formulas} can be found in Section~\ref{section outline proof}, and the full proof  is given  in Section~\ref{section proof trace formulas}. 

To illustrate Theorem~\ref{corollary trace formulas} and Theorem~\ref{theorem trace formulas}, we treat two examples in low weights.

\begin{example}\label{example trace formulas}
	We consider the cases $k\in\{2,4\}$, since then the space $S_{2k}$ of cusp forms of weight $2k$ for $\Gamma$ is trivial. By the Shimura correspondence, the space of cusp forms of weight $k+\frac{1}{2}$ for $\Gamma_{0}(4)$ in the Kohnen plus space  is isomorphic to $S_{2k}$, and hence trivial as well. This implies that for every discriminant $D > 0$ there exists a weakly holomorphic modular form $F$ of weight $\frac{3}{2}-k$ for $\Gamma_{0}(4)$ satisfying the Kohnen plus space condition such that $a_{F}(-D) = 1$ and $a_{F}(\ell) = 0$ for $-D \neq \ell < 0$. Suppose that $D > 0$ is a non-square discriminant and that $z_{\mathcal{A}}$ does not lie on any of the geodesics $C_{Q}$ for $Q \in \mathcal{Q}_{D}$. Using that $P_1(x)=x$ and $P_3(x)=\frac52 x^3-\frac32 x$ we obtain the special cases
	\begin{align*}
	\tr_{f_{2,\mathcal{A}}}(D) &= \frac{\sqrt{|d|}}{\left|\overline{\Gamma}_{z_{\mathcal{A}}}\right|}\left(\frac{c_{2}(D)}{y_{\mathcal{A}}} - 4 \!\!\!\sum_{\substack{Q = [a,b,c] \in \mathcal{Q}_{D} \\ Q_{z_{\mathcal{A}}} > 0 > a}}Q_{z_{\mathcal{A}}}\right),\\
	\tr_{f_{4,\mathcal{A}}}(D) &= \frac{\sqrt{|d|}}{\left|\overline{\Gamma}_{z_{\mathcal{A}}}\right|}\left(\frac{c_{4}(D)}{y_{\mathcal{A}}^{3}} - 2 \!\!\! \sum_{\substack{Q = [a,b,c] \in \mathcal{Q}_{D} \\ Q_{z_{\mathcal{A}}} > 0 > a}}\left(5Q_{z_{\mathcal{A}}}^{3}+3DQ_{z_{\mathcal{A}}}\right) \right).
	\end{align*}
	 In the following table, we give some numerical values for the $D$-th trace of $f_{2,[1,1,1]}$ and $f_{4,[1,1,1]}$. To clear the denominators, we display the values of $\tr_{f_{2,[1,1,1]}}(D)$ and $3\tr_{f_{4,[1,1,1]}}(D)$.
	\begin{align*}
	\begin{array}{c|c|c|c|c|c|c|c|c|c|c|c|c|c}
	D & 5 & 8 & 13 & 17 & 20 & 21 & 24 & 29 & 32 & 33 & 37 & 40 & 41  \\
	\hline\hline
	\tr_{f_{2,[1,1,1]}}(D) & 4 & 8 & 12 & 28 & 24 & 20 & 32 & 20 & 40 & 64 & 44 & 64 & 76  \\
	\hline
	3\tr_{f_{4,[1,1,1]}}(D) & 20 & 48 & 92 & 452 & 320 & 340 & 576 & 260 & 880 & 1664 & 1596 & 1920 & 2612 
	\end{array}
	\end{align*}
	We left out those discriminants $D$ which are squares, and $D = 12$ and $D = 28$ since in these cases the CM point $z_{A}$ lies on one of the geodesics of discriminant $D$. We explain the numerical evaluation in Section~\ref{section numerical evaluation}. We checked the above values using Sage, by computing the cycle integrals using numerical integration. As the values in the table above suggest, for $k \in \{2,4\}$ the numbers $|\overline{\Gamma}_{z_{\mathcal{A}}}|\cdot|d|^{\frac{k}{2}-1}\cdot \tr_{f_{k,\mathcal{A}}}(D)$ are always even integers (for any $\mathcal{A}$), which is not hard to show using the formula \eqref{eq rational formula ckD} for $c_{k}(D)$.
\end{example}

\subsection{The regularized Shintani theta lift of a meromorphic cusp form} \label{section shintani lift introduction}
	We now describe the theoretical foundation of our work. The classical Shimura-Shintani correspondence establishes a Hecke equivariant isomorphism between the spaces of cusp forms of half-integral weight $k + \frac{1}{2}$ and even integral weight $2k$, with $k \in \N := \{1,2,3,\dots\}$. Soon after its discovery by Shimura \cite{shimura}, this correspondence was realized by  Niwa \cite{niwa} and Shintani \cite{shintani} as a theta lift, that is, as an integral constructed from a theta kernel in two variables. The classical Shintani theta lift for cusp forms was recently generalized to weakly holomorphic modular forms by Guerzhoy, Kane, and the second author \cite{bringmannguerzhoykane}, to harmonic Maass forms by the first and the third author \cite{ans}, and to differentials of the third kind by Bruinier, Funke, Imamoglu, and Li \cite{bifl}. Extending the results of \cite{bifl}, we also include meromorphic cusp forms of arbitrary positive even weight with poles of arbitrary order in the upper half-plane in the Shintani theta lift. 

For $k \in \N$ we let $\mathbb{S}_{2k}$ denote the space of meromorphic cusp forms of weight $2k$ for $\Gamma$. Every meromorphic modular form can be written as a sum of a meromorphic cusp form, a weakly holomorphic modular form, and, if $k = 1$, a multiple of $j'/j$, with $j$ the usual modular $j$-invariant.  Since the Shintani theta lifts of weakly holomorphic modular forms and of the meromorphic modular form $j'/j$ have already been determined in \cite{ans, bifl}, we restrict our attention to theta lifts of meromorphic cusp forms. 
 
For $k \in \N$ and a fundamental discriminant $\Delta \in \Z$ satisfying $(-1)^{k}\Delta > 0$ we let $\Theta_{k,\Delta}(z,\tau)$ denote the Shintani theta function defined in \eqref{Thet}. The function $\Theta_{k,\Delta}(-\overline{z},\tau)$ is real-analytic in both variables and transforms like a modular form of weight $2k$ in $z$ for $\Gamma$ and of weight $k+\frac{1}{2}$ in $\tau$ for $\Gamma_{0}(4)$. We define the regularized Shintani theta lift of $f \in \mathbb{S}_{2k}$ by
\begin{align}\label{eq Shintani lift introduction}
\Phi_{k,\Delta}(f,\tau) := \left\langle f, \overline{ \Theta_{k,\Delta}(\,\cdot\,,\tau)} 
\right\rangle^{\reg},
\end{align}
where the regularized inner product is defined in \eqref{eq inner product}. The regularized integral in \eqref{eq Shintani lift introduction} exists by the following theorem.

\begin{theorem}\label{theorem regularization}
	For $f \in \mathbb{S}_{2k}$ the Shintani theta lift $\Phi_{k,\Delta}(f,\tau)$ is a real-analytic function on $\H$ that transforms like a modular form of weight $k + \frac{1}{2}$ for $\Gamma_{0}(4)$ and satisfies the Kohnen plus space condition.
\end{theorem}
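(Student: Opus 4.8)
The plan is to reduce the three assertions -- existence of the regularised integral, modularity of weight $k+\frac12$ in $\tau$, and real-analyticity -- to known properties of the theta kernel, the only genuine difficulty being the behaviour near the poles of $f$ in $\H$, which may now have arbitrary order. Fix a fundamental domain $\mathcal{F}$ for $\Gamma$ chosen so that no pole of $f$ lies on $\partial\mathcal{F}$, let $z_{1},\dots,z_{r}\in\mathcal{F}$ be the finitely many poles of $f$ in $\mathcal{F}$, and for $\varepsilon>0$, $T>0$ let $\Phi^{(\varepsilon,T)}_{k,\Delta}(f,\tau)$ denote the integral defining $\langle f,\overline{\Theta_{k,\Delta}(\,\cdot\,,\tau)}\rangle^{\reg}$ restricted to $\mathcal{F}_{\varepsilon}\cap\{y\le T\}$, where $\mathcal{F}_{\varepsilon}$ is $\mathcal{F}$ with an open disc of radius $\varepsilon$ about each $z_{j}$ removed (at an elliptic fixed point one removes a fundamental-domain-for-the-stabiliser piece of that disc, which is where the factor $1/|\overline{\Gamma}_{z_{j}}|$ comes from). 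I would then show that $\Phi_{k,\Delta}(f,\tau)=\lim_{\varepsilon\to 0}\lim_{T\to\infty}\Phi^{(\varepsilon,T)}_{k,\Delta}(f,\tau)$ exists, the two limits being independent since $f$ has no pole at the cusp.

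\emph{Existence.} Near $i\infty$ no regularisation is needed: $f\in\mathbb{S}_{2k}$ decays exponentially while $\Theta_{k,\Delta}(z,\tau)$ grows at most polynomially in $y$, so the integral over $\{y>T\}\cap\mathcal{F}$ converges absolutely and locally uniformly in $\tau$, and $T\to\infty$ is harmless. For a pole $z_{0}$ of $f$ of order $n$, write $f(z)=\sum_{j=1}^{n}c_{j}(z-z_{0})^{-j}+g(z)$ with $g$ holomorphic near $z_{0}$; the product of $g$ with the smooth density against $dx\,dy$ is bounded near $z_{0}$ and causes no problem, so we expand that density, which is real-analytic in $z$, as a power series $\sum_{a,b\ge 0}h_{ab}(\tau)\,(z-z_{0})^{a}(\overline{z-z_{0}})^{b}$ on a disc $|z-z_{0}|<\rho$. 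In polar coordinates $z-z_{0}=re^{i\vartheta}$ the angular integral $\int_{0}^{2\pi}e^{i(a-j-b)\vartheta}\,d\vartheta$ vanishes unless $a=b+j$, and for those surviving terms the radial integral $\int_{\varepsilon}^{\delta}r^{2b+1}\,dr$ converges as $\varepsilon\to 0$ because $b\ge 0$; using $|h_{ab}(\tau)|\ll_{\tau}\rho^{-a-b}$ one checks that the resulting series converges for $\delta<\rho$ and that the limit is attained locally uniformly in $\tau$. Since $\mathcal{F}\cap\{y\le T\}$ with the closed $\delta$-discs removed is compact and the integrand continuous there, this proves that $\Phi_{k,\Delta}(f,\tau)$ is well defined.

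\emph{Transformation in $\tau$ and the plus space condition.} For fixed $\varepsilon,T$ the region $\mathcal{F}_{\varepsilon}\cap\{y\le T\}$ is independent of $\tau$ (the poles of $f$ do not move with $\tau$), so the transformation law $\Theta_{k,\Delta}(\,\cdot\,,\gamma\tau)=(c\tau+d)^{k+\frac12}\nu_{\theta}(\gamma)\,\Theta_{k,\Delta}(\,\cdot\,,\tau)$ for $\gamma=\left(\begin{smallmatrix}a&b\\c&d\end{smallmatrix}\right)\in\Gamma_{0}(4)$ (with $\nu_{\theta}$ the usual theta multiplier) is inherited by $\Phi^{(\varepsilon,T)}_{k,\Delta}(f,\,\cdot\,)$, and it persists in the limit $\varepsilon\to 0$, $T\to\infty$. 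Likewise, $\Theta_{k,\Delta}$ lies in the Kohnen plus space in $\tau$ by construction, and the projection $\mathrm{pr}$ onto the plus space acts only on $\tau$; hence it commutes with the regularised $z$-integral and $\Phi_{k,\Delta}(f,\,\cdot\,)$ satisfies the plus space condition.

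\emph{Real-analyticity, and the main obstacle.} The theta kernel is a lattice sum of Gaussian-type functions in $z$ whose $\tau$-dependence is through exponential factors that still decay once $\imag(\tau)$ is replaced by any complex number with positive real part; hence $\Theta_{k,\Delta}(z,\tau)$, and therefore each $\Phi^{(\varepsilon,T)}_{k,\Delta}(f,\,\cdot\,)$, extends to a holomorphic function of $(\tau_{1},\tau_{2})$ on $\{(\tau_{1},\tau_{2})\in\C^{2}:\real(\tau_{2})>0\}$. By the estimates above the convergence $\Phi^{(\varepsilon,T)}_{k,\Delta}(f,\,\cdot\,)\to\Phi_{k,\Delta}(f,\,\cdot\,)$ is locally uniform in this complexified parameter, so by Weierstrass' theorem the limit is again holomorphic there, which is exactly to say that $\Phi_{k,\Delta}(f,\tau)$ is real-analytic in $\tau$. (One could instead use that $\Theta_{k,\Delta}$ is, modulo a total $z$-derivative, an eigenfunction of the weight $k+\frac12$ hyperbolic Laplacian in $\tau$ and invoke elliptic regularity after integrating by parts against the holomorphic $f$; or simply read real-analyticity off the Fourier expansion of Section~\ref{section shintani lift fourier expansion introduction}.) I expect the genuine obstacle to be precisely the pole regularisation: in contrast to the order-one case treated in \cite{bifl}, the integrand is only conditionally convergent near the poles, and the angular-cancellation bookkeeping -- for the integral itself, for its $\tau$-derivatives, and, in the eigenfunction variant, for the Stokes boundary terms around each $z_{j}$ -- has to be carried through carefully and uniformly in $\tau$.
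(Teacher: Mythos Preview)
Your proof is correct and follows the same strategy as the paper: near the cusp the integral converges absolutely because $f$ is cuspidal and the kernel has moderate growth, and near each pole one passes to polar coordinates and observes that angular cancellation forces the surviving radial integrand to be integrable at the origin. The paper packages this local computation in the elliptic coordinate $X_{\varrho}(z)=(z-\varrho)/(z-\overline{\varrho})$ via a general lemma on elliptic expansions of real-analytic functions (Lemma~\ref{lemma elliptic expansion}) and then proves convergence in Proposition~\ref{proposition regularized inner product}; you instead work directly with the ordinary Taylor expansion in $(z-z_{0})$ and $(\overline{z-z_{0}})$. Your version is the more elementary one for the existence statement by itself, while the paper's elliptic expansion has the advantage that its coefficients are expressed through iterated raising operators (equation~\eqref{eq elliptic constant coefficient}), which is exactly what is needed later to evaluate the boundary contributions in the Fourier expansion. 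For real-analyticity in $\tau$, the paper simply defers to the explicit Fourier expansion of Theorem~\ref{theorem fourier expansion}, which is one of the alternatives you mention parenthetically; your complexification argument is more self-contained for the purposes of this theorem alone but is not needed once the Fourier expansion is in hand.
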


More generally, Proposition \ref{proposition regularized inner product} shows that the regularized inner product in \eqref{eq Shintani lift introduction} converges even if we replace $\overline{\Theta_{k,\Delta}(\,\cdot\,, \tau)}$ by any real-analytic function $g$ which transforms like a modular form of weight $2k$ and is of moderate growth at $i\infty$.

\subsection{The Fourier expansion of the Shintani theta lift}\label{section shintani lift fourier expansion introduction} One of the main results of this paper is the Fourier expansion of the Shintani theta lift $\Phi_{k,\Delta}(f,\tau)$ of $f \in \mathbb{S}_{2k}$.  It turns out that $\Phi_{k,\Delta}(f,\tau)$ yields a completion of the generating function of twisted traces of cycle integrals
\begin{align*}
\tr_{f,\Delta}(D) := \sum_{Q \in \mathcal{Q}_{|\Delta|D}/\Gamma}\chi_{\Delta}(Q)\int_{c_{Q}}^{\reg}f(z)Q(z,1)^{k-1}dz,
\end{align*}
where $\chi_{\Delta}$ is the usual genus character as defined on page 238 of \cite{kohnenfouriercoefficients}, and the cycle integrals have to be regularized as explained in Section~\ref{section cycle integrals} if poles of $f$ lie on the geodesic $c_{Q}$.

To describe the non-holomorphic part of the Shintani theta lift, we define for $z \in \H, v > 0$, and any integral binary quadratic form $Q= [a,b,c]$ the function
\begin{align}\label{eq xi preimage}
\phi_{Q}(z,v) := \frac{\sqrt{|\Delta|}}{4}Q(z,1)^{k-1}\left(\sgn(Q_{z})-\erf\left(2\sqrt{\pi v} \frac{Q_{z}}{\sqrt{|\Delta|}} \right)\right),
\end{align}
where $\erf(x) \!:= \!\frac{2}{\sqrt{\pi}}\int_{0}^{x}e^{-t^{2}}dt$ is the error function and $\sgn(0) := 0$.
Note that $z\mapsto\phi_{Q}(z,v)$ is real-analytic up to a jump singularity along $C_{Q}$ if $Q$ has positive discriminant. More generally, for any $n \in \N_{0}$ and $z \in \H, v > 0$, we consider the function 
\begin{align*}
&R_{2-2k,z}^{n}\left(\phi_{Q}(z,v)\right) \\
&\quad:= \frac{\sqrt{|\Delta|}}{4}\left(\sgn(Q_{z})R_{2-2k,z}^{n}\left(Q(z,1)^{k-1}\right)- R_{2-2k,z}^{n}\left(Q(z,1)^{k-1}\erf\left(2\sqrt{\pi v} \frac{Q_{z}}{\sqrt{|\Delta|}} \right)\right)\right),
\end{align*}
 where $R_{\kappa}^{n} := R_{\kappa+2n-2}\circ\dots\circ R_{\kappa}$ is an iterated version of the Maass raising operator $R_{\kappa} := 2i\frac{\partial}{\partial z}+\frac{\kappa}{y}$. Furthermore, we define for $n \in \N_{0}$ and $z,\tau = u + iv \in \H$ the 
``theta function''
\[
R_{2-2k,z}^{n}\left(\theta^{*}_{1-k,\Delta}(z,\tau) \right) := \sum_{D \in \Z}\sum_{Q \in \mathcal{Q}_{|\Delta|D}}\chi_{\Delta}(Q)R_{2-2k,z}^{n}\left(\phi_{Q}(z,v)\right)e^{2\pi i D\tau}.
\]

We are now ready to state the Fourier expansion of the Shintani theta lift.
\begin{theorem}\label{theorem fourier expansion}
	Let $f \in \mathbb{S}_{2k}$. Then the Fourier expansion of the Shintani theta lift of $f$ is given by
	\begin{align*} 
	\Phi_{k,\Delta}(f,\tau) &= \frac{\sqrt{|\Delta|}}{2}\sum_{D > 0}\tr_{f,\Delta}(D)e^{2\pi i D \tau} \\
	&\quad +  (-4)^{1-k}\pi\sum_{\substack{\varrho \in \Gamma \backslash \H}}\frac{1}{\left|\overline{\Gamma}_{\varrho}\right|}\sum_{n \geq 1}c_{f,\varrho}(-n)\frac{\imag(\varrho)^{n-2k}}{(n-1)!}\left[R_{2-2k,z}^{n-1}\left(\theta^{*}_{1-k,\Delta}(z,\tau) \right)\right]_{z = \varrho},
	\end{align*}
	where $c_{f,\varrho}(\ell)$ denotes the $\ell$-th coefficient in the elliptic expansion \eqref{eq elliptic expansion} of $f$ at $\varrho \in \mathbb{H}$ and $\overline{\Gamma}_{\varrho}$ is the stabilizer of $\varrho$ in $\overline{\Gamma} = \Gamma/\{\pm 1\}$.
\end{theorem}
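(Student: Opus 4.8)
The plan is to read off the Fourier expansion in $\tau$ directly from the defining regularized integral, combining the Fourier expansion of the theta kernel in $\tau$ with a Stokes-type argument on a truncated and slit fundamental domain. Writing $\tau = u + iv$ and expanding the kernel $\Theta_{k,\Delta}$ into its Fourier series in $u$ produces coefficients of the form $\sum_{Q\in\mathcal{Q}_{|\Delta|D}}\chi_{\Delta}(Q)\psi_{D,Q}(z,v)$, where each $\psi_{D,Q}(z,v)$ is, up to an explicit Gaussian factor in $v$, a constant multiple of $Q(z,1)^{k-1}$. The regularized inner product converges by Proposition~\ref{proposition regularized inner product}, so after justifying the interchange of the sum over $Q$ with the regularized integral (using the rapid decay of the Gaussians together with the cuspidal decay of $f$), the computation of the $D$-th Fourier coefficient of $\Phi_{k,\Delta}(f,\tau)$ reduces to evaluating an integral of the shape $\int^{\reg}_{\Gamma\backslash\H}f(z)\sum_{Q}\chi_{\Delta}(Q)\psi_{D,Q}(z,v)\,y^{2k}\,d\mu(z)$. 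The defining property of $\phi_{Q}$ in \eqref{eq xi preimage} is that it exhibits, away from the geodesic $C_{Q}$ and from the poles of $f$, the product $f(z)\psi_{D,Q}(z,v)\,y^{2k}\,d\mu(z)$ as the differential of an explicit $1$-form built from $f$ and $\overline{\phi_{Q}}$.

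I would then apply Stokes' theorem on the region obtained from a fundamental domain for $\Gamma$ by truncating at height $T$, deleting small discs $B_{\epsilon}(\varrho)$ around the poles $\varrho$ of $f$ in $\Gamma\backslash\H$, and slitting along the geodesics $C_{Q}$ across which $\phi_{Q}$ has a jump. In the limits $T\to\infty$, $\epsilon\to 0$ (and after collapsing the tubular neighbourhoods of the slits), the boundary integral splits into a cusp term, geodesic terms, and pole terms. Since $f\in\mathbb{S}_{2k}$ decays like a cusp form towards $i\infty$ while the kernel grows at most polynomially, the cusp term vanishes; this is precisely why the Fourier expansion has neither a constant term nor a cuspidal term. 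The jump of $\phi_{Q}$ across $C_{Q}$ is read off from \eqref{eq xi preimage} to be $\tfrac{\sqrt{|\Delta|}}{2}Q(z,1)^{k-1}$ — the $\sgn(Q_{z})$ term jumps by $2$, the error function term is continuous — so the two sides of each slit combine to $\tfrac{\sqrt{|\Delta|}}{2}\chi_{\Delta}(Q)\int_{c_{Q}}f(z)Q(z,1)^{k-1}dz$. Unfolding $\int_{\Gamma\backslash\H}\sum_{Q\in\Gamma Q_{0}}=\int_{\Gamma_{Q_{0}}\backslash\H}$, using $\Gamma_{Q_{0}}\backslash C_{Q_{0}}=c_{Q_{0}}$, and summing over the finitely many $\Gamma$-classes in $\mathcal{Q}_{|\Delta|D}$ yields $\tfrac{\sqrt{|\Delta|}}{2}\tr_{f,\Delta}(D)$; if a pole of $f$ lies on some $C_{Q}$ the corresponding disc also cuts that slit, and the limit of the resulting slit integral is by construction the regularized cycle integral $\int^{\reg}_{c_{Q}}$ of Section~\ref{section cycle integrals}, so the bookkeeping stays consistent.

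For the pole terms I would, near each $\varrho$, substitute the elliptic expansion \eqref{eq elliptic expansion} of $f$ (with principal part governed by the coefficients $c_{f,\varrho}(-n)$, $n\geq 1$, in the disc coordinate $w=\frac{z-\varrho}{z-\overline{\varrho}}$) together with the Taylor expansion in $w$ of the function $\overline{\phi_{Q}}$, which is real-analytic near $\varrho$. As $\epsilon\to0$ only those terms in which the vanishing order of the kernel matches the pole order survive, and a residue computation isolates, for each $n\geq 1$, the product of $c_{f,\varrho}(-n)$ with the $(n-1)$-st Taylor coefficient of the kernel at $\varrho$. Rewriting that coefficient through $z$-derivatives and hence through the iterated raising operator $R_{2-2k,z}^{n-1}$ — the weight $2-2k$ being that of $Q(z,1)^{k-1}$ — and tracking the normalising constants produces the factor $(-4)^{1-k}\pi\,\tfrac{\imag(\varrho)^{n-2k}}{(n-1)!}\big[R_{2-2k,z}^{n-1}\phi_{Q}(z,v)\big]_{z=\varrho}$. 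Summing over $Q\in\mathcal{Q}_{|\Delta|D}$ with the character $\chi_{\Delta}$ and over $D\in\Z$ with the prefactor $e^{2\pi i D\tau}$ reassembles $R_{2-2k,z}^{n-1}(\theta^{*}_{1-k,\Delta}(z,\tau))$, while the factor $\tfrac{1}{|\overline{\Gamma}_{\varrho}|}$ and the sum over $\varrho\in\Gamma\backslash\H$ arise from orbit counting; adding the geodesic terms gives the asserted Fourier expansion. The correct modular transformation behaviour in $\tau$ is already guaranteed by Theorem~\ref{theorem regularization}.

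The main obstacle is the local analysis at the poles: showing that the $\epsilon$-circle integral around $\varrho$, after insertion of the elliptic expansion of $f$, reproduces exactly the iterated raising operator of the theta kernel with the precise normalisation $(-4)^{1-k}\pi\,\imag(\varrho)^{n-2k}/(n-1)!$. This requires a careful dictionary between Taylor coefficients in the disc coordinate $w$ and the operators $R_{\kappa}=2i\tfrac{\partial}{\partial z}+\tfrac{\kappa}{y}$, together with bookkeeping of all powers of $\imag(\varrho)$, of $(-4)$, and of factorials — the kind of computation in which signs and normalisations are easily misplaced. A secondary, purely analytic difficulty is justifying the interchange of the lattice sum with the regularized integral and the simultaneous limits $T\to\infty$ and $\epsilon\to0$, given that $f$ is only meromorphic and hence unbounded near each $\varrho$.
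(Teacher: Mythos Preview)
Your proposal is correct and follows essentially the same route as the paper's proof in Section~\ref{section fourier expansion}: express the $D$-th Fourier coefficient via the $L_{z}$-primitive $\phi_{Q}$ (Lemma~\ref{lemma xi preimage}), apply Stokes on the fundamental domain with discs around the poles (and, for $D>0$, tubes around the geodesics $C_{Q}$) removed, read off the trace term from the jump of $\sgn(Q_{z})$ across $C_{Q}$, and evaluate the circle integrals around the poles by the generalized residue formula (Lemma~\ref{lemma residue theorem}) combined with the identity \eqref{eq elliptic constant coefficient} that converts the relevant elliptic coefficient into $R_{2-2k}^{n-1}$ applied to the kernel. One small slip: the $1$-form in the Stokes step is $f(z)\phi_{Q}(z,v)\,dz$, not one built from $\overline{\phi_{Q}}$---the conjugation in the inner product is already undone since $g=\overline{\Theta_{k,\Delta}}$.
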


\begin{remark}\ 
	\begin{enumerate}[leftmargin=*]
		\item The sum over $\varrho \in \Gamma \backslash \H$ only runs over the finitely many poles of $f$ modulo $\Gamma$.
		\item Using a vector-valued setting as in \cite{ans,bifl}, the methods of the present paper can be applied to compute the Shintani theta lift of meromorphic cusp forms for $\Gamma_{0}(N)$.
		\item Let $\Delta = 1$ and $k \in \N$ even. The authors of \cite{bringmannkanezwegers} constructed a function $\widehat{\Psi}(z,\tau)$ on $\H \times \H$ that is real-analytic in both variables and transforms like a modular form of weight $2-2k$ in $z$ for $\Gamma$ and of weight $k+\frac{1}{2}$ in $\tau$ for $\Gamma_{0}(4)$. Comparing their construction with Theorem~\ref{theorem fourier expansion}, it is not hard to see that the Shintani theta lift $\Phi_{k,1}(f,\tau)$ of $f \in \mathbb{S}_{2k}$ is, up to addition of a holomorphic cusp form, given by a linear combination of the functions  $[R_{2-2k,z}^{n-1}(\widehat{\Psi}(z,\tau))]_{z = \varrho}$, with $\varrho$ running over the poles of $f$ in $\Gamma \backslash\H$ and $n \in \N$ with $c_{f,\varrho}(-n) \neq 0$. 
	\end{enumerate}
\end{remark}

%Next, we show that the Shintani theta lift of a meromorphic cusp form is related to $\Theta_{k,\Delta}^{*}(z,\tau)$, defined in \eqref{MTheta}. A straightforward calculation yields the following result.
%
%
%\begin{corollary}\label{corollary xi image}
%	Under the lowering operator $L := -2iv^{2}\frac{\partial}{\partial \overline{\tau}} \ (\tau=u+iv)$ the Shintani theta lift $\Phi_{k,\Delta}(f,\tau)$ of $f \in \mathbb{S}_{2k}$ maps to
%	\begin{align*}
%	L\left(\Phi_{k,\Delta}(f,\tau)\right) = (-1)^k 2^{1-2k}\pi \sum_{\substack{\varrho \in \Gamma \backslash \H}}\frac{1}{\left|\overline{\Gamma}_{\varrho}\right|}\sum_{n \geq 1}c_{f,\varrho}(-n)\frac{\imag(\varrho)^{n-2k}}{(n-1)!}\left[R_{2-2k,z}^{n-1}\left(\Theta^{*}_{k,\Delta}(z,\tau)\right)\right]_{z = \varrho}.
%	\end{align*}
%\end{corollary}

\subsection{Outline of the proof of Theorem~\ref{theorem trace formulas}}\label{section outline proof}

We now briefly explain how the above results about the Shintani theta lift of meromorphic modular forms imply the formula in Theorem~\ref{theorem trace formulas}. For the details of the proof we refer to Section~\ref{section proof trace formulas}. We let $k$ be even and $\Delta = 1$, and drop $\Delta$ from the notation. For simplicity, we assume $S_{2k} = \{0\}$, that is, $k \in \{2,4\}$. 

The proof of Theorem~\ref{theorem trace formulas} uses the theory of locally harmonic Maass forms introduced by Kane, Kohnen and the second author in \cite{bringmannkanekohnen}, together with an intimate interplay between the Shintani theta lift of meromorphic cusp forms studied in this work and another theta lift investigated by Kane, Viazovska, and the second author in \cite{brikavia}. For a harmonic Maass form $\tau \mapsto g(\tau)$ of weight $\frac{3}{2}-k$ for $\Gamma_{0}(4)$ satisfying the Kohnen plus space condition this theta lift is defined by a regularized integral
\[
\Phi^{*}_{k}(g,z) := \int_{\Gamma_{0}(4)\backslash \H}^{\reg}g(\tau)\Theta_{k}^{*}(z,\tau)\frac{dudv}{v^{2}},
\]
where $\Theta_{k}^{*}(z,\tau)$ is a theta function which has weight $k-\frac{3}{2}$ in $\tau$ and weight $2-2k$ in $z$ (see \eqref{MTheta} for its definition), and the regularized integral is defined as in \eqref{eq integral PD 1} below. In \cite{brikavia}, the authors showed that for $D > 0$ the theta lift of the $D$-th weakly holomorphic Poincar\'e series $P_{\frac{3}{2}-k,D}(\tau) = \frac{2}{3}q^{-D}+O(1)$ of weight $\frac{3}{2}-k$ is a multiple of a so-called locally harmonic Maass form $\mathcal{F}_{1-k,D}(z)$, that is,
\begin{align}\label{eq1}
\Phi^{*}_{k}\left(P_{\frac{3}{2}-k,D},z\right) \stackrel{\bigcdot}{=}\mathcal{F}_{1-k,D}(z),
\end{align}
where the symbol $\stackrel{\bigcdot}{=}$ means equality up to some non-zero constant factor. The function $\mathcal{F}_{1-k,D}$, which was first studied in \cite{bringmannkanekohnen}, transforms like a modular form of weight $2-2k$ for $\Gamma$ and is harmonic on $\H$ up to jump singularities along the geodesics $C_{Q}$ for $Q \in \mathcal{Q}_{D}$. Moreover, it turns out that in the case $S_{2k} = \{0\}$ the locally harmonic Maass form $\mathcal{F}_{1-k,D}$ is in fact locally a polynomial, which has been explicitly determined in \cite{bringmannkanekohnen}.

On the other hand, let us consider the Shintani theta lift $\Phi_{k}(f_{k,\mathcal{A}},\tau)$ of the meromorphic cusp form $f_{k,\mathcal{A}}$ for a class $\mathcal{A} \in \mathcal{Q}_{d}/\Gamma$ with $d < 0$. A short calculation using the Fourier expansion from Theorem~\ref{theorem fourier expansion} shows that the lowering operator $L := -2iv^{2}\frac{\partial}{\partial \overline{\tau}}$ acts on $\Phi_{k}(f_{k,\mathcal{A}},\tau)$ by
\begin{align}\label{eq2}
L\left( \Phi_{k}(f_{k,\mathcal{A}},\tau)\right) \stackrel{\bigcdot}{=} \left[R_{2-2k,z}^{k-1}\left( \Theta_{k}^{*}(z,\tau)\right)\right]_{z = z_{\mathcal{A}}},
\end{align}
where $z_{\mathcal{A}}$ is a fixed CM point $z_{Q}$ for some $Q \in \mathcal{A}$. We apply the iterated raising operator to \eqref{eq1} and plug in \eqref{eq2} to obtain
\begin{align*}
\int_{\Gamma_{0}(4)\backslash \H}^{\reg}P_{\frac{3}{2}-k,D}(\tau)L\left(\Phi_{k}(f_{k,\mathcal{A}},\tau)\right)\frac{dudv}{v^{2}} \stackrel{\bigcdot}{=} \left[R_{2-2k}^{k-1}\left(\mathcal{F}_{1-k,D}(z) \right)\right]_{z = z_{\mathcal{A}}}.
\end{align*}
The integral on the left-hand side can now be evaluated by a standard argument using Stokes' Theorem. It turns out that it essentially equals the $D$-th Fourier coefficient of $\Phi_{k}(f_{k,\mathcal{A}},\tau)$, which is the $D$-th trace of cycle integrals of $f_{k,\mathcal{A}}$. Hence we arrive at
\begin{align*}
\tr_{f_{k,\mathcal{A}}}(D) \stackrel{\bigcdot}{=} \left[R_{2-2k}^{k-1}\left(\mathcal{F}_{1-k,D}(z) \right)\right]_{z = z_{\mathcal{A}}}.
\end{align*}
The action of the iterated raising operator on the locally polynomial function $\mathcal{F}_{1-k,D}$ can easily be calculated, which yields the formula in Theorem~\ref{theorem trace formulas}.

\subsection{Organization of the paper} We start with a section on the necessary preliminaries about theta functions, regularized inner products, and cycle integrals of meromorphic cusp forms. In the remaining part of this work, we give the proofs of the above results. To prove Theorem~\ref{theorem regularization} we derive elliptic expansions of real-analytic functions on $\H$ and study regularized inner products of meromorphic and real-analytic modular forms in Section~\ref{section regularized integrals}. The computation of the Fourier expansion from Theorem~\ref{theorem fourier expansion} is explained in Section~\ref{section fourier expansion}. In Section~\ref{section proof trace formulas}, we give the proof of Theorem~\ref{corollary trace formulas} and Theorem~\ref{theorem trace formulas}. Finally, in Section~\ref{section numerical evaluation} we provide some details on the numerical evaluation of the rational formulas for the traces from Theorem~\ref{theorem trace formulas}.

\section*{Acknowledgements}
We thank Jan Bruinier for helpful discussions and Stephan Ehlen, Jens Funke, Pavel Guerzhoy, Chris Jennings-Shaffer, Ken Ono, and Shaul Zemel for comments on an earlier version of this paper. Moreover, we thank the referee for helpful comments.

\section{Preliminaries}
\label{section preliminaries}

\subsection{Theta functions}
\label{section theta functions}

 For $k \in \N$ and $\Delta \in \Z$ a fundamental discriminant satisfying $(-1)^{k}\Delta > 0$, the Shintani theta function is defined as
\begin{align}\label{Thet}
\Theta_{k,\Delta}(z,\tau) := y^{-2k}v^{\frac{1}{2}}\sum_{D \in \Z}\sum_{Q \in \calQ_{|\Delta|D}}\chi_{\Delta}(Q)Q(z,1)^{k}e^{-4\pi v \frac{Q_{z}^{2}}{|\Delta|}}e^{2\pi i D \tau}.
\end{align}
Note that $\Theta_{k,\Delta}(z,\tau)$ would vanish identically if $(-1)^{k}\Delta < 0$. The function $\Theta_{k,\Delta}(-\overline{z},\tau)$ is real-analytic in both variables and transforms like a modular form of weight $2k$ in $z$ for $\Gamma$ and weight $k + \frac{1}{2}$ in $\tau$ for $\Gamma_{0}(4)$ (see Proposition~3.2 of \cite{brikavia}). Moreover, as a function of $z$ it has moderate growth at $i\infty$ (see Proposition~4.2 of \cite{ans}). 

	We also consider the Millson theta function
	\begin{align}\label{MTheta}
	\Theta^{*}_{k,\Delta}(z,\tau) := v^{\frac32}\sum_{D \in \Z}\sum_{Q \in \mathcal{Q}_{|\Delta|D}}\chi_{\Delta}(Q)Q_{z}Q(z,1)^{k-1} e^{-4\pi v \frac{Q_{z}^{2}}{|\Delta|}}e^{2\pi i D \tau},
	\end{align}
	which transforms like a modular form of weight $2-2k$ in $z$ for $\Gamma$ and weight $k-\frac{3}{2}$ in $\tau$ for $\Gamma_{0}(4)$ (see Proposition~3.2 of \cite{brikavia}).
	
\subsection{Regularized inner products}
\label{section inner products}
Next, we describe the regularized inner product in \eqref{eq Shintani lift introduction}, which was first introduced by Petersson \cite{petersson} and later rediscovered and extended by Harvey and Moore \cite{harveymoore}, Borcherds \cite{borcherds}, Bruinier \cite{bruinierhabil}, and others. We denote by $[\varrho_{1}],\dots,[\varrho_{r}] \in \Gamma \backslash \H$ the equivalence classes of all of the poles of $f$ on $\H$ and we choose a fundamental domain $\mathcal{F}^{*}$ for $\Gamma \backslash \H$ such that $\varrho_{\ell} \in \overline{\Gamma}_{\varrho_{\ell}}\mathcal{F}^{*}$ for all $1 \leq \ell \leq r$. For any $\varrho \in \H$ and $\varepsilon > 0$ we consider the $\varepsilon$-ball around $\varrho$, 
\begin{align*}
B_{\varepsilon}(\varrho) := \left \{z \in \H: |X_{\varrho}(z)| < \varepsilon \right \}, \qquad X_{\varrho}(z):= \frac{z-\varrho}{z-\overline{\varrho}}.
\end{align*}
Let $g: \mathbb{H} \to \mathbb{C}$ be real-analytic and assume that $g$ transforms like a modular form of weight $2k$ for $\Gamma$ and is of moderate (i.e., polynomial) growth at $i\infty$. We define the regularized Petersson inner product of $f$ and $g$ by
\begin{align}\label{eq inner product}
\left\langle f, g \right\rangle^{\reg}  := \lim_{\varepsilon_{1},\dots,\varepsilon_{r} \to 0}\int_{\mathcal{F}^{*}\setminus \bigcup_{\ell=1}^{r}B_{\varepsilon_{\ell}}(\varrho_{\ell})}f(z)\overline{g(z)}y^{2k}\frac{dxdy}{y^{2}}.
\end{align}
We see in Proposition~\ref{proposition regularized inner product} that the regularized inner product exists.

		\begin{remark}
		Similar regularized inner products in the case that both $f$ and $g$ are meromorphic cusp forms or weakly holomorphic modular forms have recently been studied, for example, in \cite{bringmannkanevonpippich, zemel}. 
	\end{remark}

\subsection{Regularized cycle integrals of meromorphic cusp forms}
\label{section cycle integrals}

Let $D > 0$ and let $Q = [a,b,c]\in \mathcal{Q}_{D}$. The associated geodesic $C_{Q}$ is a semi-circle centered at the real line if $a \neq 0$, and a vertical line if $a = 0$. We orient it counterclockwise if $a > 0$ and from $-\frac{c}{b}$ to $i\infty$ if $a=0$ and $b>0$. If poles of $f$ lie on $C_{Q}$, then we modify it as follows. For every pole $\varrho$ of $f$ lying on $C_{Q}$ choose $\varepsilon > 0$ sufficiently small such that no other poles of $f$ lie on $B_{\varepsilon}(\varrho)$. We denote by $C_{Q,\varepsilon}^{\pm}$ the path that agrees with $C_{Q}$ outside of every such ball but circumvents every pole $\varrho$ of $f$ along the boundary arc of $B_{\varepsilon}(\varrho)$ that lies in the connected component of $\H \setminus C_{Q}$ with $\pm Q_{z} > 0$. Moreover $c_{Q}:=\Gamma_{Q}\backslash C_{Q}$ is the image of $C_{Q}$ in the modular curve $\Gamma \backslash \H$ and  $c_{Q,\varepsilon}^{\pm} := \Gamma_{Q} \backslash C_{Q,\varepsilon}^{\pm}$.

We define the regularized geodesic cycle integral of $f \in \mathbb{S}_{2k}$ along $c_{Q}$ by
\begin{equation*}
\int_{c_{Q}}^{\reg}f(z)Q(z,1)^{k-1}dz := \frac{1}{2}\lim_{\varepsilon \to 0}\left(\int_{c_{Q,\varepsilon}^{+}}f(z)Q(z,1)^{k-1}dz+\int_{c_{Q,\varepsilon}^{-}}f(z)Q(z,1)^{k-1}dz\right).
\end{equation*}
This is sometimes also called the Cauchy principal value of the geodesic cycle integral, see Section 2.4 of \cite{bifl}. Furthermore, if no pole of $f$ lies on $c_{Q}$, then the above definition agrees with the usual definition of geodesic cycle integrals.

\section{Proof of Theorem~\ref{theorem regularization}}
\label{section regularized integrals}

A meromorphic function $f: \H \to \C$ has an elliptic expansion near $\varrho \in \H$ of the shape
\begin{align}\label{eq elliptic expansion}
f(z) = (z-\overline{\varrho})^{-2k}\sum_{n \gg -\infty}c_{f,\varrho}(n)X_{\varrho}^{n}(z), 
\end{align}
with coefficients $c_{f,\varrho}(n) \in \C$. For a proof see Proposition~17 in Zagier's part of  \cite{zagier123}, note that the required modularity of $f$ in the cited proposition is actually not necessary.

 In order to prove Theorem~\ref{theorem regularization}, we would like to plug in elliptic expansions of $f$ and $\Theta_{k,\Delta}(z,\tau)$ near the poles of $f$. Note that $ z \mapsto \Theta_{k,\Delta}(z,\tau)$ is real-analytic. The shape of elliptic expansions of real-analytic functions are described in the following lemma.

\begin{lemma}\label{lemma elliptic expansion}
		Let $\kappa\in\Z$, $\varrho \in \H$, and $g: \H \to \C$ be real-analytic near $\varrho$. Then $g$ has an elliptic expansion of the shape (near $\varrho$)
		\begin{align}\label{eq elliptic expansion real analytic}
		g(z) = (z-\overline{\varrho})^{-\kappa}\sum_{n\in \Z}c_{g,\varrho}(|X_{\varrho}(z)|,n)X_{\varrho}^{n}(z),
		\end{align}
		with coefficients $c_{g,\varrho}(r,n) \in \C$, which are analytic as functions of the real variable $r$. Near $r=0$ we have the Taylor expansion
		\begin{align}\label{eq elliptic coefficients real analytic}
		c_{g,\varrho}(r,n) = \sum_{m\geq \max\{0,-n\}}a_{g,\varrho,n}(m)r^{2m},
		\end{align}
		with coefficients $a_{g,\varrho,n}(m) \in \C$ given in \eqref{eq elliptic expansion 2}. The constant term $c_{g,\varrho}(0,n)$ is given by
		\begin{align}\label{eq elliptic constant coefficient}
		c_{g,\varrho}(0,n) = \begin{cases} 
		\frac{1}{n!}(2i)^{\kappa}\imag(\varrho)^{n+\kappa}R_{\kappa}^{n}(g)(\varrho) & \text{if } n \geq 0 ,\\
		0 & \text{if } n < 0.
		\end{cases}
		\end{align}
	\end{lemma}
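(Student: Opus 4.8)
The plan is to transfer the problem to the unit disk via the biholomorphism $w=X_\varrho(z)=\frac{z-\varrho}{z-\overline\varrho}$, expand there in a convergent double power series in $w$ and $\overline w$, and regroup by powers of $w$. Write $\psi$ for the inverse map, $\psi(w)=\frac{\varrho-\overline\varrho w}{1-w}$, so that $\psi(0)=\varrho$; one computes directly
\[
z-\overline\varrho=\frac{2i\imag(\varrho)}{1-w},\qquad \imag(z)=\frac{\imag(\varrho)(1-|w|^{2})}{|1-w|^{2}},\qquad \psi'(w)=\frac{2i\imag(\varrho)}{(1-w)^{2}}=\frac{(z-\overline\varrho)^{2}}{2i\imag(\varrho)}.
\]
Since $g$ is real-analytic near $\varrho$ and $(1-w)^{-\kappa}$ is holomorphic near $w=0$ (here $\kappa\in\Z$ is essential, so no branch of a power has to be chosen), the function
\[
G(w):=(2i\imag(\varrho))^{\kappa}(1-w)^{-\kappa}g(\psi(w))=(z-\overline\varrho)^{\kappa}g(z)
\]
is real-analytic near $w=0$ and hence has an absolutely convergent expansion $G(w)=\sum_{a,b\ge 0}\alpha_{g,\varrho}(a,b)w^{a}\overline w^{b}$ on a disk $|w|<\varepsilon$, with $\sum_{a,b\ge 0}|\alpha_{g,\varrho}(a,b)|r^{a+b}<\infty$ for $r<\varepsilon$.

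Next I would regroup this series by the exponent of $w$. Writing $w=re^{i\vartheta}$ on a circle of radius $r<\varepsilon$, one has $w^{a}\overline w^{b}=r^{a+b}e^{i(a-b)\vartheta}=r^{a+b-n}w^{n}$ with $n=a-b$, so absolute convergence gives $G(w)=\sum_{n\in\Z}c_{g,\varrho}(r,n)w^{n}$ with
\[
c_{g,\varrho}(r,n):=r^{-n}\sum_{\substack{a,b\ge 0\\ a-b=n}}\alpha_{g,\varrho}(a,b)r^{a+b}=\sum_{m\ge \max\{0,-n\}}\alpha_{g,\varrho}(m+n,m)r^{2m},
\]
where in the last equality one substitutes $m=b$ if $n\ge 0$ and $m=a-n$ if $n<0$. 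This series converges for $|r|<\varepsilon$ (it is dominated by $\sum_{a,b}|\alpha_{g,\varrho}(a,b)|r^{a+b}$), so each $c_{g,\varrho}(\cdot,n)$ is analytic near $0$ with the Taylor expansion \eqref{eq elliptic coefficients real analytic}, where $a_{g,\varrho,n}(m)=\alpha_{g,\varrho}(m+n,m)$. Dividing out the factor $(z-\overline\varrho)^{-\kappa}$ yields \eqref{eq elliptic expansion real analytic}, and from the range of summation we read off $c_{g,\varrho}(0,n)=0$ for $n<0$ and $c_{g,\varrho}(0,n)=\alpha_{g,\varrho}(n,0)=\frac{1}{n!}\partial_{w}^{n}G(0)$ for $n\ge 0$.

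It remains to identify this constant term with the expression in \eqref{eq elliptic constant coefficient}, which is where the raising operator enters. I would use the factorization $R_{\kappa}=y^{-\kappa}\circ(2i\partial_{z})\circ y^{\kappa}$ together with the chain rule $2i\partial_{z}=\frac{(1-w)^{2}}{\imag(\varrho)}\partial_{w}$ on functions pulled back along $\psi$ (valid since $\psi$ is holomorphic, so $\partial_{w}\overline{\psi(w)}=0$), and the formulas above, to show that the pullback $\widetilde F(w):=(1-w)^{-\kappa}F(\psi(w))$ of a weight-$\kappa$ function $F$ satisfies
\[
\widetilde{R_{\kappa}F}=\frac{1}{\imag(\varrho)}(1-|w|^{2})^{-\kappa}\partial_{w}(1-|w|^{2})^{\kappa}\widetilde F=\frac{1}{\imag(\varrho)}\left(\partial_{w}-\frac{\kappa\overline w}{1-|w|^{2}}\right)\widetilde F .
\]
Iterating (each step raising the weight parameter by $2$) expresses $\widetilde{R_{\kappa}^{n}g}$ as $\imag(\varrho)^{-n}$ times the nested operator $(1-|w|^{2})^{-\kappa-2n+2}\partial_{w}(1-|w|^{2})^{2}\partial_{w}\cdots(1-|w|^{2})^{2}\partial_{w}(1-|w|^{2})^{\kappa}$ applied to $\widetilde g$, with $n$ copies of $\partial_{w}$ and $n-1$ interleaved factors $(1-|w|^{2})^{2}$. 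Since $\partial_{w}(1-|w|^{2})=-\overline w$ vanishes at $w=0$, a short induction (carrying along that the error term is divisible by $\overline w$) shows that upon setting $w=0$ all inserted factors disappear, so $\widetilde{R_{\kappa}^{n}g}(0)=\imag(\varrho)^{-n}\partial_{w}^{n}\widetilde g(0)$. On the other hand $\widetilde{R_{\kappa}^{n}g}(0)=(R_{\kappa}^{n}g)(\varrho)$, since evaluating a weight-$(\kappa+2n)$ pullback at $w=0$ just returns the value at $\psi(0)=\varrho$; combined with $G=(2i\imag(\varrho))^{\kappa}\widetilde g$ this gives $\partial_{w}^{n}G(0)=(2i)^{\kappa}\imag(\varrho)^{n+\kappa}(R_{\kappa}^{n}g)(\varrho)$, which is exactly \eqref{eq elliptic constant coefficient}.

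The manipulations with the absolutely convergent double series are routine; the one step that genuinely requires care is the last one, namely establishing the clean conjugation formula for $R_{\kappa}$ on the disk and verifying that the lower-order terms in its $n$-fold iterate drop out at the center $w=0$. (Alternatively one can bypass the disk conjugation and prove \eqref{eq elliptic constant coefficient} by a direct induction on $n$, using $\partial_{w}=\frac{(z-\overline\varrho)^{2}}{2i\imag(\varrho)}\partial_{z}$ and an inductive description of $\partial_{w}^{n}((z-\overline\varrho)^{\kappa}g)$ evaluated at $z=\varrho$, but the disk-model computation makes the structure more transparent.)
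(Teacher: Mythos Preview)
Your argument is correct and follows the same overall strategy as the paper: pull back to the unit disk via $w=X_\varrho(z)$, expand the real-analytic function $(1-w)^{-\kappa}g(\psi(w))$ as an absolutely convergent double series in $w,\overline w$, and regroup by the exponent of $w$. Two points of difference are worth noting. First, the paper carries out the expansion more explicitly: it Taylor-expands $g$ at $\varrho$, substitutes $W=\frac{2i\varrho_2 w}{1-w}$, and binomially expands the resulting $(1-w)^{-\kappa-a}(1-\overline w)^{-b}$ factors, arriving at a closed formula \eqref{eq elliptic expansion 2} for $a_{g,\varrho,n}(m)$ in terms of $\partial_z^a\partial_{\overline z}^b g(\varrho)$; you instead identify $a_{g,\varrho,n}(m)$ abstractly as the Taylor coefficient $\alpha_{g,\varrho}(m+n,m)$ of $G$, which suffices for all later applications but does not literally reproduce the displayed formula the lemma points to. Second, for the constant term \eqref{eq elliptic constant coefficient} the paper simply invokes formula~(56) in Zagier's part of \cite{zagier123}, whereas you give a self-contained derivation by conjugating $R_\kappa$ to the disk operator $\imag(\varrho)^{-1}(1-|w|^2)^{-\kappa}\partial_w(1-|w|^2)^{\kappa}$ and observing that in the $n$-fold iterate all correction terms vanish at $w=0$ because every $\partial_w$-derivative of $\overline w/(1-|w|^2)$ carries a surviving factor of $\overline w$. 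Your route is more transparent and avoids the external reference; the paper's route has the advantage of simultaneously yielding the explicit coefficient formula.
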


	\begin{proof}
		We generalize the proof of Proposition~17 in Zagier's part in \cite{zagier123}. Expanding as a Taylor series we obtain
		\[
		g(\varrho+W) =\sum_{a,b \geq 0} \left[\frac{\partial^{b}}{\partial \overline {z}^{b}}\frac{\partial^{a}}{\partial z^{a}}g(z)\right] _{z=\varrho}\frac{W^{a}}{a!}\frac{\overline{W}^{b}}{b!}.
		\]
		Writing $\frac{\varrho - \overline{\varrho}w}{1-w}= \varrho + \frac{2i\varrho_2 w}{1-w}$ ($\varrho_2:=\text{Im}(\varrho)$) we obtain, for $|w|$ sufficiently small, the formula
		\begin{align}\label{eq elliptic expansion 1}
		(1-w)^{-\kappa}g\left(\frac{\varrho-\overline{\varrho}w}{1-w}\right) = (1-w)^{-\kappa}\sum_{a,b \geq 0}\left[\frac{\partial^{b}}{\partial \overline {z}^{b}}\frac{\partial^{a}}{\partial z^{a}}g(z)\right]_{z=\varrho}\frac{\left(\frac{2i\varrho_2 w}{1-w}\right)^{a}}{a!}\frac{\left(-\frac{2i\varrho_2\overline{w}}{1-\overline{w}}\right)^{b}}{b!}.
		\end{align}
		Expanding $(1-w)^{-\kappa-a}$ and $(1-\overline{w})^{-b}$ using the Binomial Theorem, the right-hand side of \eqref{eq elliptic expansion 1} becomes 
		\begin{align*}
		\sum_{a,b, j, \ell \geq 0}\binom{\kappa+a+j-1}{j}\binom{b+\ell-1}{\ell}\left[\frac{\partial^{b}}{\partial \overline {z}^{b}}\frac{\partial^{a}}{\partial z^{a}}g(z)\right]_{z=\varrho}\frac{(2i\varrho_2)^{a}}{a!}\frac{(-2i\varrho_2)^{b}}{b!}w^{a+j-b-\ell}|w|^{2b+2\ell}.
		\end{align*}
		We reorder the summation by setting $n = a + j-b-\ell$ and $m = b+\ell$. Plugging in $w = X_{\varrho}(z)$ and using the formulas 
		\[
		1-w = \frac{2i\varrho_2}{z-\overline{\varrho}}, \qquad \frac{\varrho-\overline{\varrho}w}{1-w} = z,
		\]
		we obtain the elliptic expansion
		\begin{align*}
		g(z) &= (z-\overline{\varrho})^{-\kappa}\sum_{n \in \Z}\left(\sum_{m \geq \max\{0,-n\}}a_{g,\varrho,n}(m)|X_{\varrho}(z)|^{2m}\right)X_{\varrho}^{n}(z)
		\end{align*}
		with coefficients
		\begin{multline}\label{eq elliptic expansion 2}
		\hspace{-1em}a_{g,\varrho,n}(m) \\
		:= \left(2i\varrho_{2}\right)^{\kappa}\sum_{\substack{0 \leq a \leq m+n \\ 0 \leq b \leq m}}\binom{\kappa+m+n-1}{m+n-a}\binom{m-1}{m-b}\left[\frac{\partial^{a}}{\partial z^{a}}\frac{\partial^{b}}{\partial \overline{z}^{b}}g(z)\right]_{z=\varrho}\frac{(2i\varrho_2)^{a}}{a!}\frac{(-2i\varrho_2)^{b}}{b!}.
		\end{multline}
		The formula for $c_{g,\varrho}(0,n) = a_{g,\varrho,n}(0)$ follows from \eqref{eq elliptic expansion 2} and (56) in Zagier's part in \cite{zagier123}.
	\end{proof}
	
	Using the elliptic expansion of a real-analytic function given in Lemma~\ref{lemma elliptic expansion}, we can now prove that the regularized Petersson inner product of a meromorphic cusp form and a real-analytic modular form exists.
	
	\begin{proposition}\label{proposition regularized inner product}
		The regularized Petersson inner product, defined in \eqref{eq inner product}, exists.
	\end{proposition}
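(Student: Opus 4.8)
The plan is to split the integral over $\mathcal{F}^*$ into a compact piece near the cusp and small punctured balls around each pole $\varrho_\ell$, and to show convergence of each piece separately. Away from the balls $B_{\varepsilon_\ell}(\varrho_\ell)$ and outside a neighbourhood of $i\infty$, the integrand $f(z)\overline{g(z)}y^{2k}$ is continuous on a compact set, so the only issues are (i) the behaviour at the cusp $i\infty$, and (ii) the behaviour as $\varepsilon_\ell \to 0$ near each pole. For (i), since $f \in \mathbb{S}_{2k}$ decays like a cusp form towards $i\infty$ and $g$ has at most polynomial growth there, the product $f(z)\overline{g(z)}y^{2k}$ decays exponentially in $y$, so the integral over the cuspidal region converges absolutely and contributes nothing problematic; this part is immediate and can be dispatched in one line.

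The substance is (ii). Fix a pole $\varrho = \varrho_\ell$ of $f$, of order $N$ say, and work in the local coordinate $w = X_\varrho(z)$, which maps a neighbourhood of $\varrho$ to a neighbourhood of $0$ in the unit disc, with $B_\varepsilon(\varrho)$ corresponding to $|w| < \varepsilon$. The plan is to insert the elliptic expansion \eqref{eq elliptic expansion} of $f$, which has the form $f(z) = (z-\overline{\varrho})^{-2k}\sum_{n \geq -N} c_{f,\varrho}(n) w^n$, together with the real-analytic elliptic expansion \eqref{eq elliptic expansion real analytic} of $g$ from Lemma~\ref{lemma elliptic expansion} with $\kappa = 2k$, namely $g(z) = (z-\overline{\varrho})^{-2k}\sum_{m \in \Z} c_{g,\varrho}(|w|,m) w^m$ with $c_{g,\varrho}(r,m)$ analytic in $r$ and vanishing to order $\max\{0,-m\}$ at $r=0$ by \eqref{eq elliptic coefficients real analytic}. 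One then needs the Jacobian: a standard computation gives $y^{2k}\frac{dx\,dy}{y^2} = |z-\overline{\varrho}|^{4k} \cdot (\text{something})$, more precisely $y = \imag(\varrho)\frac{1-|w|^2}{|1-w|^2}$ and $z - \overline\varrho = \frac{2i\imag(\varrho)}{1-w}$, so that $\overline{(z-\overline\varrho)^{-2k}}\,(z-\overline\varrho)^{-2k}\,y^{2k}\frac{dx\,dy}{y^2}$ becomes, up to a nonvanishing real-analytic factor bounded near $w = 0$, a constant times $(1-|w|^2)^{2k-2}\,\frac{dA(w)}{|1-w|^{?}}$ where $dA(w)$ is Lebesgue measure on the disc. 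The upshot is that, after multiplying the two expansions and integrating over $\varepsilon < |w| < \delta$, the angular integration $\int_0^{2\pi} e^{i(n-m)\theta}\,d\theta$ kills all terms with $n \neq m$, leaving a sum over $n$ of terms of the form $\int_\varepsilon^\delta c_{f,\varrho}(n) \overline{c_{g,\varrho}(r,n)} \, r^{2n+1} h(r)\,dr$ with $h$ real-analytic and nonzero at $0$. For $n \geq 0$ this is harmless. For $-N \leq n < 0$ the potentially divergent factor $r^{2n+1}$ is compensated exactly because $c_{g,\varrho}(r,n)$ vanishes to order $-n$ in $r$ by \eqref{eq elliptic coefficients real analytic}, so $\overline{c_{g,\varrho}(r,n)}\,r^{2n+1} = O(r^{-2n} \cdot r^{2n+1}) = O(r)$, and each such integral converges as $\varepsilon \to 0$. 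Hence the limit in \eqref{eq inner product} exists.

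The main obstacle — and the only place where care is genuinely needed — is justifying the term-by-term integration: interchanging the (infinite) sums from the two elliptic expansions with the integral over the annulus, and controlling uniformity as $\varepsilon \to 0$. The plan is to handle this by splitting $f = f_{\mathrm{sing}} + f_{\mathrm{hol}}$ near $\varrho$, where $f_{\mathrm{sing}}$ is the finite principal part $\sum_{n=-N}^{-1} c_{f,\varrho}(n)(z-\overline\varrho)^{-2k} w^n$ and $f_{\mathrm{hol}}$ is holomorphic near $\varrho$. For the product $f_{\mathrm{hol}}\overline{g}$ the integrand is bounded near $\varrho$ (using moderate growth / real-analyticity of $g$, hence boundedness on the compact closure of a small ball), so that piece converges trivially and no interchange is needed. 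For the finitely many singular terms, one is left with a finite sum, and for each fixed $n \in \{-N, \dots, -1\}$ one may legitimately expand $g$ in its convergent elliptic series on a fixed small disc, integrate angularly term by term (justified by uniform convergence of the series on compact subsets of the punctured disc, together with the vanishing order of $c_{g,\varrho}(r,n)$ that ensures absolute integrability down to $r = 0$), and conclude. Finally, since the choice of fundamental domain $\mathcal{F}^*$ only affects the finitely many boundary identifications and the poles are interior to $\overline{\Gamma}_{\varrho_\ell}\mathcal{F}^*$ by hypothesis, the limit is independent of these choices, completing the proof.
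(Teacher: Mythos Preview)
Your approach is essentially the paper's: cut out small balls around the poles, pass to the coordinate $w = X_\varrho(z)$, insert the elliptic expansions of $f$ and of $g$ from Lemma~\ref{lemma elliptic expansion}, and use the angular integral to collapse to diagonal terms $n = m$, whose convergence then follows from $c_{g,\varrho}(r,n) = O(r^{|2n|})$ for $n < 0$ (note: the vanishing order is $|2n|$, not $|n|$, though your final computation uses the correct power). Your $f_{\mathrm{sing}}+f_{\mathrm{hol}}$ split is actually a tidier justification of the sum--integral interchange than the paper gives.

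One technical step you elide, and which the paper makes explicit, is the passage from the wedge $A_\varepsilon^\delta(\varrho) \cap \mathcal{F}^*$ to the full annulus $A_\varepsilon^\delta(\varrho)$. If $\varrho$ is an elliptic fixed point (as the CM poles of $f_{k,\mathcal{A}}$ at $i$ or $e^{2\pi i/3}$ may well be), then $\mathcal{F}^*$ contains only a sector of any small ball around $\varrho$, and on a proper sector the angular integral $\int e^{i(n-m)\vartheta}d\vartheta$ does \emph{not} kill the off-diagonal terms, so your cancellation argument breaks down. Your closing remark that ``the poles are interior to $\overline{\Gamma}_{\varrho_\ell}\mathcal{F}^*$'' gestures at the fix but does not supply it. The paper resolves this in one line via the disjoint union $B_\varepsilon(\varrho) = \dot{\bigcup}_{M \in \overline{\Gamma}_\varrho} M(B_\varepsilon(\varrho)\cap\mathcal{F}^*)$ and modularity of the integrand, trading the wedge for $|\overline{\Gamma}_\varrho|^{-1}$ times the full annulus; after that your argument goes through verbatim.
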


	\begin{proof}
		We divide $\mathcal{F}^{*}$ into a compact domain containing all of the poles $\varrho_{1},\dots ,\varrho_{r}$ of $f$, and a remaining set on which $f$ is holomorphic. The integral over the second set converges since $f$ decays like a cusp form towards $i\infty$ and $g$ is of moderate growth at $i\infty$. Hence it suffices to show that for every pole $\varrho:=\varrho_{\ell}$ of $f$ and $\delta > 0$ sufficiently small the limit
		\[
		\lim_{\varepsilon \to 0}\int_{A_{\varepsilon}^{\delta}(\varrho)\cap \mathcal{F}^{*}}f(z)\overline{g(z)}y^{2k}\frac{dxdy}{y^{2}}, \qquad A_{\varepsilon}^{\delta}(\varrho) := B_{\delta}(\varrho)\setminus \overline{B_{\varepsilon}(\varrho)},
		\]
		 exists. In order to obtain an integral over the whole annulus $A_{\varepsilon}^{\delta}(\varrho)$, we recall that, by (2a.15) of \cite{petersson50}, we have the disjoint union
\begin{align}\label{eq disjoint union}
B_{\varepsilon}(\varrho) = \dot{\bigcup}_{M \in \overline{\Gamma}_{\varrho}}M\left(B_{\varepsilon}(\varrho)\cap \mathcal{F}^{*}\right).
\end{align}
Therefore we can write, using the modularity of $f$ and $g$,
		\begin{align}\label{splitunion}
		\int_{A_{\varepsilon}^{\delta}(\varrho)\cap \mathcal{F}^{*}}f(z)\overline{g(z)}y^{2k}\frac{dxdy}{y^{2}} = \frac{1}{\left|\overline{\Gamma}_{\varrho}\right|}\int_{A_{\varepsilon}^{\delta}(\varrho)}f(z)\overline{g(z)}y^{2k}\frac{dxdy}{y^{2}}.
	\end{align}
		We plug in the elliptic expansions of $f$ from \eqref{eq elliptic expansion} and $g$ from \eqref{eq elliptic expansion real analytic} with $\kappa = 2k$ to rewrite the right-hand side of \eqref{splitunion} as 
		\begin{align*}
		\frac{1}{\left|\overline{\Gamma}_{\varrho}\right|}\int_{A_{\varepsilon}^{\delta}(\varrho)}\sum_{\substack{n \gg -\infty \\ m\in\Z}}c_{f,\varrho}(n)\overline{c_{g,\varrho}(|X_{\varrho}(z)|,m)}X_{\varrho}^{n}(z)\overline{X_{\varrho}^{m}(z)}\frac{y^{2k}}{|z-\overline{\varrho}|^{4k}}\frac{dxdy}{y^{2}}.
		\end{align*}
		We next make the change of variables $X_{\varrho}(z) = e^{i\vartheta}r$ with $0 < \vartheta < 2\pi$ and $\varepsilon < r <\delta$. Then a short calculation shows that
		\[
		\frac{y^{2k}}{|z-\overline{\varrho}|^{4k}} = \left(\frac{1-r^{2}}{4\varrho_2}\right)^{2k}, \qquad \frac{dxdy}{y^{2}} = \frac{4r}{(1-r^{2})^{2}}d\vartheta dr,
		\]
		where again $\varrho_2=\imag(\varrho)$. Thus
		\begin{equation}\label{rewriteint}
		\frac{4}{(4\varrho_2)^{2k}\left|\overline{\Gamma}_{\varrho}\right|}\int_{\varepsilon}^{\delta}\int_{0}^{2\pi}\sum_{\substack{n \gg -\infty \\ m\in\Z}}c_{f,\varrho}(n)\overline{c_{g,\varrho}(r,m)}e^{i\vartheta(n-m)}r^{m+n+1}\left(1-r^{2}\right)^{2k-2}d\vartheta dr.
		\end{equation}
		The integral over $\vartheta$ vanishes unless $m = n$, in which case it equals $2\pi$, thus \eqref{rewriteint} becomes
		\[
		\frac{8\pi}{(4\varrho_2)^{k}\left|\overline{\Gamma}_{\varrho}\right|}\int_{\varepsilon}^{\delta}\sum_{n\gg -\infty}c_{f,\varrho}(n)\overline{c_{g,\varrho}(r,n)}r^{2n+1}\left(1-r^{2}\right)^{2k-2} dr.
		\]
		The limit as $\varepsilon \to 0$ of the contribution from $n \geq 0$ clearly exists. Hence we need to show that the integral
		\[
		\int_{0}^{\delta}\overline{c_{g,\varrho}(r,n)}r^{2n+1}\left(1-r^{2}\right)^{2k-2} dr
		\]
		exists for the finitely many $n < 0$ with $c_{f,\varrho}(n) \neq 0$. Indeed, the integral exists since for $n < 0$ we have $c_{g,\varrho}(r,n) = O(r^{|2n|})$ by \eqref{eq elliptic coefficients real analytic}. This finishes the proof.
	\end{proof}
	We are now ready to prove Theorem \ref{theorem regularization}.

	\begin{proof}[Proof of Theorem~\ref{theorem regularization}]
	Noting that $g(z) = \overline{\Theta_{k,\Delta}(z,\tau)}$ is real-analytic in $z$ and of moderate growth at $i\infty$, Proposition~\ref{proposition regularized inner product} shows that the regularized Shintani theta lift \eqref{eq Shintani lift introduction} exists and hence transforms like a modular form of weight $k + \frac{1}{2}$ for $\Gamma_{0}(4)$. The fact that the Shintani theta lift is real-analytic follows from its Fourier expansion given in Theorem~\ref{theorem fourier expansion}. This finishes the proof of Theorem~\ref{theorem regularization}.
\end{proof}

\section{Proof of Theorem~\ref{theorem fourier expansion}}
\label{section fourier expansion}

In this section we compute the Fourier expansion of the Shintani theta lift and thereby prove Theorem~\ref{theorem fourier expansion}. 

\subsection{Preliminary computations}
We start with a useful formula which can be viewed as a generalization of the Residue Theorem.

\begin{lemma}\label{lemma residue theorem}
		Let $\varrho \in \H$, let $f:\mathbb H\to \C$ be meromorphic near $\varrho$, and let $g:\mathbb H\to\C$ be real-analytic near $\varrho$. Then we have the formula
		\[
		\lim_{\varepsilon \to 0}\int_{\partial B_{\varepsilon}(\varrho)}f(z)g(z)dz = \frac{\pi}{\imag(\varrho)}\sum_{n < 0}c_{f,\varrho}(n)c_{g,\varrho}(0,-n-1),
		\]
		where $c_{f,\varrho}(n)$ and $c_{g,\varrho}(r,n)$ are the coefficients of the elliptic expansions \eqref{eq elliptic expansion} of $f$ and \eqref{eq elliptic expansion real analytic} of $g$ at $\varrho$ (with $\kappa = 2-2k$).
	\end{lemma}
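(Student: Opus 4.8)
The plan is to reduce the statement to the classical Residue Theorem by passing to the local coordinate $w := X_{\varrho}(z) = \frac{z-\varrho}{z-\overline{\varrho}}$. As a M\"obius transformation, $z \mapsto X_{\varrho}(z)$ is holomorphic and orientation-preserving; it maps a neighborhood of $\varrho$ biholomorphically onto a neighborhood of $0$ and carries the curve $\partial B_{\varepsilon}(\varrho) = \{z \in \H : |X_{\varrho}(z)| = \varepsilon\}$ to the positively oriented circle $\{|w| = \varepsilon\}$. Inverting gives $z = \frac{\varrho - \overline{\varrho}w}{1-w}$, and a direct computation (cf.\ the proof of Lemma~\ref{lemma elliptic expansion}) yields
\[
z - \overline{\varrho} = \frac{2i\imag(\varrho)}{1-w}, \qquad dz = \frac{2i\imag(\varrho)}{(1-w)^{2}}\,dw .
\]
First I would fix $\delta > 0$ small enough that $f$ is holomorphic on $B_{\delta}(\varrho)\setminus\{\varrho\}$ and both elliptic expansions converge there, and work with $0 < \varepsilon < \delta$ throughout.

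Next I would insert the elliptic expansion \eqref{eq elliptic expansion} of $f$, which has prefactor $(z-\overline{\varrho})^{-2k}$, and the real-analytic elliptic expansion \eqref{eq elliptic expansion real analytic} of $g$ with $\kappa = 2-2k$, which has prefactor $(z-\overline{\varrho})^{-(2-2k)}$; on the circle $\{|w| = \varepsilon\}$ the $g$-coefficients $c_{g,\varrho}(|X_{\varrho}(z)|,m)$ become the constants $c_{g,\varrho}(\varepsilon,m)$. The crucial algebraic point is that the two prefactors multiply to $(z-\overline{\varrho})^{-2}$, which under the substitution becomes $(2i\imag(\varrho))^{-2}(1-w)^{2}$; the factor $(1-w)^{2}$ exactly cancels the $(1-w)^{-2}$ coming from $dz$, so that on $\{|w|=\varepsilon\}$ one is left with
\[
f(z)g(z)\,dz = \frac{1}{2i\imag(\varrho)}\sum_{n \gg -\infty}\sum_{m \in \Z}c_{f,\varrho}(n)\,c_{g,\varrho}(\varepsilon,m)\,w^{n+m}\,dw ,
\]
with no residual $(1-w)$-factor. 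I would then split $f = f_{\mathrm{prin}} + f_{\mathrm{hol}}$ into its (finite) principal part and its holomorphic part at $\varrho$, and dispose of the $f_{\mathrm{hol}}$-part by a crude bound: both $f_{\mathrm{hol}}$ and $g$ are continuous, hence bounded, near $\varrho$, while the Euclidean length of $\partial B_{\varepsilon}(\varrho)$ is $O(\varepsilon)$, so $\int_{\partial B_{\varepsilon}(\varrho)}f_{\mathrm{hol}}(z)g(z)\,dz = O(\varepsilon) \to 0$ as $\varepsilon \to 0$.

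For $f_{\mathrm{prin}}$ I would integrate term by term, using that $\int_{|w|=\varepsilon}w^{N}\,dw$ equals $2\pi i$ for $N=-1$ and $0$ otherwise; only the summands with $m = -n-1$ survive, giving
\[
\int_{\partial B_{\varepsilon}(\varrho)}f_{\mathrm{prin}}(z)g(z)\,dz = \frac{\pi}{\imag(\varrho)}\sum_{n<0}c_{f,\varrho}(n)\,c_{g,\varrho}(\varepsilon,-n-1) .
\]
This is a finite sum, and each index $-n-1$ is $\geq 0$, so letting $\varepsilon \to 0$ merely replaces $c_{g,\varrho}(\varepsilon,-n-1)$ by the Taylor constant term $c_{g,\varrho}(0,-n-1)$, cf.\ \eqref{eq elliptic coefficients real analytic}. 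Together with the vanishing of the $f_{\mathrm{hol}}$-contribution, this is the claimed formula.

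The step I expect to need the most care — the main obstacle — is justifying the term-by-term integration of the infinite $m$-sum on $\{|w|=\varepsilon\}$. This rests on the absolute, hence uniform, convergence of the elliptic expansion of the real-analytic function $g$ on that circle: in the coordinates $(w,\overline{w})$ the function $(z-\overline{\varrho})^{2-2k}g(z)$ is a convergent double power series on $\{|w| < \delta\}$, so on $\{|w| = \varepsilon < \delta\}$, where $\overline{w} = \varepsilon^{2}/w$, it reorganizes into an absolutely convergent Laurent series in $w$ that may be integrated termwise — and likewise after multiplying by the Laurent polynomial in $w$ that represents $f_{\mathrm{prin}}$ and by the contribution of $dz$. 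Conceptually, the only departure from the usual Residue Theorem is that, $g$ being merely real-analytic, its elliptic coefficients along $\{|w|=\varepsilon\}$ are the functions $\varepsilon \mapsto c_{g,\varrho}(\varepsilon,m)$ rather than constants; this is precisely why the ``residue'' picks up the limits $c_{g,\varrho}(0,\cdot)$, so that the constant terms, and not plain Laurent coefficients, appear in the final formula.
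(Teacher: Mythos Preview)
Your proposal is correct and follows essentially the same approach as the paper. The paper also inserts both elliptic expansions, uses that the prefactors combine to $(z-\overline{\varrho})^{-2}$, and picks out the terms with $m=-n-1$ via the Residue Theorem; the only organizational differences are that the paper stays in the $z$-variable (evaluating $\int_{\partial B_{\varepsilon}(\varrho)}(z-\varrho)^{m+n}(z-\overline{\varrho})^{-m-n-2}\,dz$ directly) rather than passing to $w$, and that instead of splitting off $f_{\mathrm{hol}}$ it keeps all $n$ and kills the $n\geq 0$ terms after the limit using $c_{g,\varrho}(0,-n-1)=0$ from \eqref{eq elliptic constant coefficient}.
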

	
	\begin{proof}
		For $\varepsilon > 0$ sufficiently small we can plug in the elliptic expansions of $f$ and of $g$ to obtain that
		\begin{align*}
		\int_{\partial B_{\varepsilon}(\varrho)}f(z)g(z)dz &= \int_{\partial B_{\varepsilon}(\varrho)}(z-\overline{\varrho})^{-2}\sum_{\substack{n \gg -\infty\\ m \in \Z}}c_{f,\varrho}(n)c_{g,\varrho}(|X_{\varrho}(z)|,m)X_{\varrho}^{m+n}(z) dz \\
		&= \sum_{\substack{n \gg -\infty\\ m \in \Z}}c_{f,\varrho}(n)c_{g,\varrho}(\varepsilon,m)\int_{\partial B_{\varepsilon}(\varrho)}\frac{(z-\varrho)^{m+n}}{(z-\overline{\varrho})^{m+n+2}} dz.
		\end{align*}
		In the last step we use that $|X_{\varrho}(z)| = \varepsilon$ on $\partial B_{\varepsilon}(\varrho)$. By the Residue Theorem the last integral vanishes unless $m = -n-1$, in which case it equals $\frac{2\pi i}{\varrho-\overline{\varrho}}$. Taking the limit $\varepsilon \to 0$ and using that $c_{g,\varrho}(0,-n-1) = 0$ for $n \geq 0$ by \eqref{eq elliptic constant coefficient}, we obtain the stated formula.
	\end{proof}
	
	To ease notation, define 
	\[
	\varphi_{Q}(z,v):= y^{-2k}v^{\frac{1}{2}}Q(z,1)^{k}e^{-4\pi v \frac{Q_{z}^{2}}{|\Delta|}},
	\]
	so that the Shintani theta function can be written as
	\[
	\Theta_{k,\Delta}(z,\tau) = \sum_{D \in \Z}\sum_{Q \in \mathcal{Q}_{|\Delta|D}}\chi_{\Delta}(Q)\varphi_{Q}(z,v)e^{2\pi i D\tau}.
	\]
	A short calculation, using that $\frac{\partial}{\partial \overline{z}}Q_z=-y^2\frac{i}{2} Q(z,1)$, yields the following result.
	
	\begin{lemma} \label{lemma xi preimage}
	For $z \in \H$ with $Q_{z} \neq 0$ and all $v > 0$ the function $\phi_Q$, defined in \eqref{eq xi preimage}, satisfies 
	\[
	L_{z}\left( \phi_{Q}(z,v)\right) = \varphi_{Q}(z,v)y^{2k}.
	\]
	\end{lemma}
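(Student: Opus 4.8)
The plan is to verify the identity $L_z(\phi_Q(z,v)) = y^{2k}\varphi_Q(z,v)$ by a direct differentiation, using that the lowering operator is $L_z=-2iy^2\frac{\partial}{\partial\bar z}$ (independent of the weight). It therefore suffices to compute $\frac{\partial}{\partial\bar z}\phi_Q(z,v)$ on the open set $\{z\in\H:Q_z\neq 0\}$. In the defining formula \eqref{eq xi preimage}, the factor $Q(z,1)^{k-1}=(az^2+bz+c)^{k-1}$ is holomorphic, hence annihilated by $\frac{\partial}{\partial\bar z}$, and on $\{Q_z\neq 0\}$ the function $z\mapsto\sgn(Q_z)$ is locally constant, hence also annihilated by $\frac{\partial}{\partial\bar z}$. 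Thus only the error-function term contributes and
\[
\frac{\partial}{\partial\bar z}\phi_Q(z,v)=-\frac{\sqrt{|\Delta|}}{4}\,Q(z,1)^{k-1}\,\frac{\partial}{\partial\bar z}\erf\!\left(2\sqrt{\pi v}\,\frac{Q_z}{\sqrt{|\Delta|}}\right).
\]

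Next I would apply the chain rule, using $\erf'(t)=\frac{2}{\sqrt\pi}e^{-t^2}$ together with the identity $\frac{\partial}{\partial\bar z}Q_z=-\frac{i}{2y^2}Q(z,1)$ recorded just before the lemma (which is also checked directly by writing $Q_z=\frac{2i}{z-\bar z}\big(az\bar z+\tfrac b2(z+\bar z)+c\big)$ and differentiating). This gives
\[
\frac{\partial}{\partial\bar z}\erf\!\left(2\sqrt{\pi v}\,\frac{Q_z}{\sqrt{|\Delta|}}\right)=\frac{2}{\sqrt\pi}\,e^{-4\pi v Q_z^2/|\Delta|}\cdot\frac{2\sqrt{\pi v}}{\sqrt{|\Delta|}}\cdot\left(-\frac{i}{2y^2}Q(z,1)\right).
\]
Collecting the constants — the factors of $\sqrt\pi$ cancel, the denominator $\sqrt{|\Delta|}$ cancels the prefactor $\sqrt{|\Delta|}/4$, and $Q(z,1)^{k-1}Q(z,1)=Q(z,1)^k$ — yields $\frac{\partial}{\partial\bar z}\phi_Q(z,v)=\frac{i}{2}\sqrt v\,y^{-2}Q(z,1)^k e^{-4\pi v Q_z^2/|\Delta|}$. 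Applying $L_z=-2iy^2\frac{\partial}{\partial\bar z}$, the factors $y^{\pm 2}$ cancel and $-2i\cdot\frac i2=1$, so
\[
L_z\big(\phi_Q(z,v)\big)=\sqrt v\,Q(z,1)^k e^{-4\pi v Q_z^2/|\Delta|}=y^{2k}\varphi_Q(z,v),
\]
which is the claimed identity.

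I do not expect a genuine obstacle here: the argument is essentially one application of the chain rule plus two elementary inputs, and the bookkeeping of the constants $\pi$, $v$, $|\Delta|$, $y$ is routine. The one point deserving a word of care is the hypothesis $Q_z\neq 0$: it is needed both to make the differentiation legitimate — so that $\sgn(Q_z)$ is genuinely constant near $z$, rather than merely piecewise constant — and to guarantee that $\phi_Q(\cdot,v)$ is real-analytic at $z$ in the first place, since for a form $Q$ of positive discriminant $\phi_Q$ has a jump discontinuity along the geodesic $C_Q=\{Q_z=0\}$.
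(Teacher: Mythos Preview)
Your proof is correct and follows exactly the approach indicated in the paper, namely a direct computation using $\frac{\partial}{\partial\bar z}Q_z=-\frac{i}{2y^2}Q(z,1)$ together with the chain rule for $\erf$. (Note that the formula as printed in the paper, $\frac{\partial}{\partial\bar z}Q_z=-y^{2}\tfrac{i}{2}Q(z,1)$, is a typo; your version with $y^{-2}$ is the correct one, as your own verification shows.)
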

	
	\begin{remark} The function $\phi_{Q}(z,v)$ was used in \cite{ans} to compute the Shintani theta lift of harmonic Maass forms and in \cite{bifl} for $k = 1$ to compute the Shintani theta lift of meromorphic modular forms of weight two that are holomorphic at the cusps and have at most simple poles in $\H$. 
	\end{remark} 
	Using Lemma~\ref{lemma xi preimage}, we see that the $D$-th Fourier coefficient of the regularized Shintani theta lift \eqref{eq Shintani lift introduction} of $f \in \mathbb{S}_{2k}$ with poles at $[\varrho_{1}],\dots,[\varrho_{r}] \in \Gamma \backslash \H$ is given by
	\begin{align}\label{eq Dth coefficient}
	\lim_{\varepsilon_{1},\dots,\varepsilon_{r} \to 0}\int_{\mathcal{F}^{*}\setminus \bigcup_{\ell=1}^{r}B_{\varepsilon_{\ell}}(\varrho_{\ell})}f(z)L_z\left(\sum_{Q \in \mathcal{Q}_{|\Delta|D}}\chi_{\Delta}(Q)\phi_{Q}(z,v)\right)\frac{dx dy}{y^{2}}.
	\end{align}
	We compute the Fourier coefficients for $D \leq 0$ and $D > 0$ separately.
	\subsection{The coefficients of index $D \leq 0$.} For $D \leq 0$ and $Q \in \mathcal{Q}_{|\Delta|D}$ the function $z\mapsto \phi_{Q}(z,v)$ is real-analytic. Using Stokes' Theorem, we find that \eqref{eq Dth coefficient} equals
\begin{align}\label{eq stokes theorem 1}
-\lim_{\varepsilon_{1},\dots,\varepsilon_{r} \to 0}\int_{\partial \left(\mathcal{F}^{*}\setminus \bigcup_{\ell = 1}^{r}B_{\varepsilon_{\ell}}(\varrho_{\ell}) \right)}f(z)\sum_{Q \in \mathcal{Q}_{|\Delta|D}}\chi_{\Delta}(Q)\phi_{Q}(z,v)dz.
\end{align}
Here we also use the fact that $f$ decays like a cusp form towards $i\infty$. If $\varepsilon_{1},\dots,\varepsilon_{r}$ are sufficiently small, then the boundary of $\mathcal{F}^{*}\setminus \bigcup_{\ell = 1}^{r}B_{\varepsilon_{\ell}}(\varrho_{\ell})$ consists of a disjoint union of the boundary arcs $-\partial B_{\varepsilon_{\ell}}(\varrho_{\ell}) \cap \mathcal{F}^{*}$ for $1 \leq \ell \leq r$ and further remaining boundary pieces, which come in $\Gamma$-equivalent pairs with opposite orientation and hence cancel out in the integral due to the modularity of the integrand. Therefore, \eqref{eq stokes theorem 1} equals 
\begin{align}\label{eq stokes theorem 2}
\sum_{1\leq \ell \leq r}\sum_{Q \in \mathcal{Q}_{|\Delta|D}}\chi_{\Delta}(Q)\lim_{\varepsilon_{\ell} \to 0}\int_{\partial B_{\varepsilon_{\ell}}(\varrho_{\ell}) \cap \mathcal{F}^{*}}f(z)\phi_{Q}(z,v)dz.
\end{align}
Using the disjoint union \eqref{eq disjoint union} again, we can rewrite \eqref{eq stokes theorem 2} as
\begin{align*}
\sum_{1\leq \ell \leq r}\frac{1}{\left|\overline{\Gamma}_{\varrho_{\ell}}\right|}\sum_{Q \in \mathcal{Q}_{|\Delta|D}}\chi_{\Delta}(Q)\lim_{\varepsilon_{\ell} \to 0}\int_{\partial B_{\varepsilon_{\ell}}(\varrho_{\ell})}f(z)\phi_{Q}(z,v)dz.
\end{align*}
	Since $\phi_{Q}(z,v)$ is real-analytic at $z = \varrho_\ell$, Lemma~\ref{lemma residue theorem} combined with \eqref{eq elliptic constant coefficient} yields that
	\begin{multline}\label{eq integral evaluation}
	\lim_{\varepsilon_{\ell} \to 0}\int_{\partial B_{\varepsilon_{\ell}}(\varrho_{\ell})}f(z)\phi_{Q}(z,v)dz 
	\\= (-4)^{1-k}\pi\sum_{n \geq 1}c_{f,\varrho_{\ell}}(-n)\frac{\imag(\varrho_{\ell})^{n-2k}}{(n-1)!}
	\left[R_{2-2k,z}^{n-1}\left(\phi_{Q}(z,v) \right)\right]_{z = \varrho_{\ell}}.
	\end{multline}
	We obtain the coefficients of index $D \leq 0$ as stated in Theorem~\ref{theorem fourier expansion}.

	\subsection{The coefficients of index $D > 0$} 
	We first assume that the poles of $f$ do not lie on any of the geodesics $C_{Q}$ for $Q \in \mathcal{Q}_{|\Delta|D}$. Since $\varphi_{Q}(z,v)$ has a jump singularity along the geodesic $C_{Q}$, one cannot directly apply Stokes' Theorem. Hence, for $\delta > 0$ and every $Q \in \mathcal{Q}_{|\Delta|D}$ we first cut out a $\delta$-tube
\[
U_{\delta}(C_{Q}) :=  \left \{z \in \H: |Q_{z}| < \delta \right \}
\]
around $C_{Q}$ from $\mathcal{F}^{*}$. Applying Stokes' Theorem, we can rewrite \eqref{eq Dth coefficient} as
\begin{align}\label{eq coefficient D positive}
\begin{split}
&\sum_{Q \in \mathcal{Q}_{|\Delta|D}}\chi_{\Delta}(Q)\lim_{\delta \to 0}\int_{\partial U_{\delta}(C_{Q}) \cap \mathcal{F}^{*}}f(z)\phi_{Q}(z,v)dz  \\
&\quad +\sum_{ 1 \leq \ell \leq r}\frac{1}{\left|\overline{\Gamma}_{\varrho_{\ell}}\right|}\sum_{Q \in \mathcal{Q}_{|\Delta|D}}\chi_{\Delta}(Q)\lim_{\varepsilon_{\ell} \to 0}\int_{\partial B_{\varepsilon_{\ell}}(\varrho_{\ell})}f(z)\phi_{Q}(z,v)dz.
\end{split}
\end{align} 
Since $z\mapsto \phi_{Q}(z,v)$ is real-analytic at every pole of $f$ by assumption, the integral in the second term can be evaluated using Lemma~\ref{lemma residue theorem} as in \eqref{eq integral evaluation} above. The first term in \eqref{eq coefficient D positive} is computed in the following proposition.

\begin{proposition}
	Assume that no pole of $f$ lies on any geodesic $C_{Q}$ for $Q \in \mathcal{Q}_{|\Delta|D}$. Then we have 
	\begin{align}\label{inttrace}
	\sum_{Q \in \mathcal{Q}_{|\Delta|D}}\chi_{\Delta}(Q)\lim_{\delta \to 0}\int_{\partial U_{\delta}(C_{Q}) \cap \mathcal{F}^{*}}f(z)\phi_{Q}(z,v)dz = \frac{\sqrt{|\Delta|}}{2}\tr_{f,\Delta}(D).
	\end{align}
\end{proposition}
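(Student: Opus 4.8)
The plan is to evaluate the left-hand side of \eqref{inttrace} by first unfolding the sum against $\Gamma$ and then letting $\delta \to 0$, so that each boundary integral collapses onto a geodesic cycle integral over the closed curve $c_{Q} = \Gamma_{Q}\backslash C_{Q}$. First I would observe that for a fixed $\delta_{0} > 0$ only finitely many $Q \in \mathcal{Q}_{|\Delta|D}$ satisfy $\partial U_{\delta}(C_{Q}) \cap \mathcal{F}^{*} \neq \emptyset$ for some $0 < \delta \leq \delta_{0}$: away from the cusp the domain $\mathcal{F}^{*}$ is relatively compact and only finitely many semicircles $C_{Q}$ of a fixed discriminant come within bounded hyperbolic distance of a compact set, while the cuspidal region is reached only by the vertical geodesics $C_{[0,b,c]}$, of which only finitely many lie near $\mathcal{F}^{*}$ (and these occur only when $|\Delta|D$ is a perfect square). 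Hence for $\delta \leq \delta_{0}$ the sum in \eqref{inttrace} is supported on one fixed finite set of $Q$, so it is finite and one may interchange $\sum_{Q}$ with $\lim_{\delta \to 0}$.

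Next I would unfold. Writing $M \cdot Q := Q \circ M^{-1}$ for $M \in \Gamma$, one has $\chi_{\Delta}(M\cdot Q) = \chi_{\Delta}(Q)$, $(M\cdot Q)_{Mz} = Q_{z}$, and $(M\cdot Q)(Mz,1) = j(M,z)^{-2}Q(z,1)$ with $j(M,z)$ the usual automorphy factor; combined with the weight $2k$ transformation law of $f$, a weight count shows that $f(z)\phi_{Q}(z,v)\,dz$ is invariant under the simultaneous substitution $(z,Q) \mapsto (Mz, M\cdot Q)$. Since moreover $\partial U_{\delta}(C_{M\cdot Q}) = M\, \partial U_{\delta}(C_{Q})$, grouping the sum over $Q \in \mathcal{Q}_{|\Delta|D}$ into $\Gamma$-orbits, changing variables $z \mapsto Mz$ in each term, and using the unfolding of the fundamental domain as in \eqref{eq disjoint union} (now applied to the $\Gamma_{Q}$-invariant tubes in place of the balls $B_{\varepsilon}(\varrho)$) yields, for each fixed $\delta$,
\[
\sum_{Q \in \mathcal{Q}_{|\Delta|D}}\chi_{\Delta}(Q)\int_{\partial U_{\delta}(C_{Q}) \cap \mathcal{F}^{*}}f(z)\phi_{Q}(z,v)\,dz = \sum_{Q \in \mathcal{Q}_{|\Delta|D}/\Gamma}\chi_{\Delta}(Q)\int_{c_{Q}^{\delta}}f(z)\phi_{Q}(z,v)\,dz,
\]
where $c_{Q}^{\delta} := \Gamma_{Q}\backslash\partial U_{\delta}(C_{Q})$ denotes the image of $\partial U_{\delta}(C_{Q})$ in $\Gamma\backslash\H$.

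Now I would analyze the two components of $c_{Q}^{\delta}$, namely the images of $\{Q_{z} = \delta\}$ and $\{Q_{z} = -\delta\}$; these are arcs through the two endpoints of $C_{Q}$ on $\partial\H$ converging to $C_{Q}$ as $\delta \to 0$. Tracking the boundary orientation coming from Stokes' Theorem in the derivation of \eqref{eq coefficient D positive} — which, using $f\, L_{z}(\phi_{Q})\tfrac{dxdy}{y^{2}} = -\,d(f\phi_{Q}\,dz)$ on the region where $f$ is holomorphic, is the orientation of $\partial U_{\delta}(C_{Q})$ as the boundary of the tube $U_{\delta}(C_{Q})$ — one finds that $\{Q_{z}=\delta\}$ inherits the orientation of $C_{Q}$ and $\{Q_{z}=-\delta\}$ the opposite one. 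On $\{Q_{z} = \pm\delta\}$ we have $\sgn(Q_{z}) = \pm1$ and $\erf(2\sqrt{\pi v}\,Q_{z}/\sqrt{|\Delta|}) \to 0$ as $\delta \to 0$, so by \eqref{eq xi preimage} the integral over $\{Q_{z} = \pm\delta\}$ tends to $\pm\tfrac{\sqrt{|\Delta|}}{4}$ times the cycle integral of $f(z)Q(z,1)^{k-1}$ along $C_{Q}$. The two sign reversals cancel, so both components contribute equally, and passing to the limit (justified by this uniform convergence together with the $C^{1}$-convergence of the arcs) gives
\[
\lim_{\delta \to 0}\int_{c_{Q}^{\delta}}f(z)\phi_{Q}(z,v)\,dz = \frac{\sqrt{|\Delta|}}{2}\int_{c_{Q}}f(z)Q(z,1)^{k-1}\,dz = \frac{\sqrt{|\Delta|}}{2}\int_{c_{Q}}^{\reg}f(z)Q(z,1)^{k-1}\,dz,
\]
the last equality because by hypothesis no pole of $f$ lies on $C_{Q}$. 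Summing over $Q \in \mathcal{Q}_{|\Delta|D}/\Gamma$ with the twist $\chi_{\Delta}(Q)$ then gives exactly $\tfrac{\sqrt{|\Delta|}}{2}\tr_{f,\Delta}(D)$, which is \eqref{inttrace}.

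The main obstacle I anticipate is the sign bookkeeping in the third step: I must make sure that the cancellation of the two reversals — the opposite boundary orientations of the two sides of the tube against the opposite values of $\sgn(Q_{z})$ on them — produces $+\tfrac{\sqrt{|\Delta|}}{2}$ rather than $-\tfrac{\sqrt{|\Delta|}}{2}$, which requires carrying the orientation conventions for $C_{Q}$ (counterclockwise if $a > 0$, from $-\tfrac{c}{b}$ to $i\infty$ if $a = 0$) carefully through the Stokes' Theorem step. A minor additional point is the justification of the $\delta \to 0$ limit near the cusp when $|\Delta|D$ is a perfect square and the $c_{Q}$ are non-compact, which follows from the cusp-form decay of $f$ towards $i\infty$.
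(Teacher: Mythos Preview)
Your proof is correct and uses the same key ideas as the paper: the vanishing of the $\erf$-term in the limit $\delta\to 0$, the sign cancellation between $\sgn(Q_{z})$ and the induced boundary orientations of the two sides of the tube, and the unfolding against $\Gamma$ to produce the trace of cycle integrals. The only difference is the order of operations: the paper first lets $\delta\to 0$ to obtain integrals over the geodesic pieces $C_{Q}\cap\mathcal{F}^{*}$ and then unfolds these pieces via $Q\mapsto Q\circ M$ with $M\in\Gamma_{Q}\backslash\Gamma$, whereas you unfold first to obtain integrals over the quotient tubes $c_{Q}^{\delta}=\Gamma_{Q}\backslash\partial U_{\delta}(C_{Q})$ and then pass to the limit. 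Both orderings are legitimate; the paper's order has the minor advantage that the unfolding step is literally the classical one for geodesic cycle integrals, while yours requires the (equally valid, but slightly less standard) observation that the tubes $\partial U_{\delta}(C_{Q})$ themselves are $\Gamma_{Q}$-invariant.
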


\begin{proof}
By definition \eqref{eq xi preimage} the left-hand side of \eqref{inttrace} equals
\begin{multline}\label{eq inttrace1}
\frac{\sqrt{|\Delta|}}{4}\sum_{Q \in \mathcal{Q}_{|\Delta|D}}\chi_{\Delta}(Q)\\
\times \lim_{\delta \to 0}\int_{\partial U_{\delta}(C_{Q}) \cap \mathcal{F}^{*}}f(z)Q(z,1)^{k-1}
\left(\sgn(Q_{z})-\erf\left(2\sqrt{\pi v}\frac{Q_{z}}{\sqrt{|\Delta|}}\right)\right)dz.
\end{multline}
The path $\partial U_{\delta}(C_{Q}) \cap \mathcal{F}^{*}$ can be divided into two paths which are distinguished by the sign of $Q_{z}$. The orientation of $C_{Q}$ is given such that the connected component of $\H \setminus C_{Q}$ with $Q_{z} > 0$ lies on the right in the direction of travel. The part of $\partial U_{\delta}(C_{Q}) \cap \mathcal{F}^{*}$ which lies in the component of $\H \setminus C_{Q}$ with $Q_{z} > 0$ has the same orientation. Using $\erf(0) = 0$, the expression in \eqref{eq inttrace1} is therefore given by
\begin{align}\label{computelimit}
\frac{\sqrt{|\Delta|}}{2}\sum_{Q \in \mathcal{Q}_{|\Delta|D}}\chi_{\Delta}(Q)\int_{C_{Q} \cap \mathcal{F}^{*}}f(z)Q(z,1)^{k-1}dz.
\end{align}
We split $Q \in \mathcal{Q}_{|\Delta|D}$ as $Q\circ M$ with $Q \in \mathcal{Q}_{|\Delta|D}/\Gamma$ and $M \in \Gamma_{Q}\backslash \Gamma$, and rewrite \eqref{computelimit} as
\begin{align*}
&\frac{\sqrt{|\Delta|}}{2}\sum_{Q \in \mathcal{Q}_{|\Delta|D}/\Gamma}\sum_{M \in \Gamma_{Q} \backslash \Gamma}\chi_{\Delta}(Q\circ M)\int_{C_{Q\circ M} \cap \mathcal{F}^{*}}f(z)(Q\circ M)(z,1)^{k-1}dz \\
&\quad=\frac{\sqrt{|\Delta|}}{2}\sum_{Q \in \mathcal{Q}_{|\Delta|D}/\Gamma}\chi_{\Delta}(Q)\sum_{M \in \Gamma_{Q} \backslash \Gamma}\int_{C_{Q} \cap M\mathcal{F}^{*}}f(z)Q(z,1)^{k-1}dz \\
&\quad= \frac{\sqrt{|\Delta|}}{2}\sum_{Q \in \mathcal{Q}_{|\Delta|D}/\Gamma}\chi_{\Delta}(Q)\int_{\Gamma_{Q}\backslash C_{Q}}f(z)Q(z,1)^{k-1}dz.
\end{align*}
For the first equality we use that $\chi_{\Delta}(Q\circ M) = \chi_{\Delta}(Q)$, $(Q\circ M)(z,1) = Q(z,1)|_{-2}M$, and $C_{Q \circ M} = M^{-1} C_{Q}$. We obtain the formula as stated in the proposition.
\end{proof}

We next consider the case that the poles of $f$ do lie on a geodesic $C_{Q}$ for some $Q \in \mathcal{Q}_{|\Delta|D}$.  By similar arguments as above we find that \eqref{eq Dth coefficient} for $D > 0$ equals
\begin{align}\label{eq coefficient D positive with poles}
\begin{split}
&\frac{\sqrt{|\Delta|}}{2}\sum_{\substack{Q \in \mathcal{Q}_{|\Delta|D} \\ \varrho_{1},\dots,\varrho_{r} \not\in C_{Q}}}\chi_{\Delta}(Q)\int_{C_{Q} \cap \mathcal{F}^{*}}f(z)Q(z,1)^{k-1}dz  \\
&\quad +\sum_{\substack{Q \in \mathcal{Q}_{|\Delta|D} \\ \varrho_{1},\dots,\varrho_{r} \notin C_{Q}} }\chi_{\Delta}(Q)\sum_{ 1\leq \ell \leq r}\frac{1}{\left|\overline{\Gamma}_{\varrho_{\ell}}\right|}\lim_{\varepsilon_{\ell} \to 0}\int_{\partial B_{\varepsilon_{\ell}}(\varrho_{\ell})}f(z)\phi_{Q}(z,v)dz \\
&\quad +\sum_{\substack{Q \in \mathcal{Q}_{|\Delta|D} \\ \varrho_{\ell} \in C_{Q} \\\text{for some $\ell$}} }\chi_{\Delta}(Q)\lim_{\varepsilon_{1},\dots,\varepsilon_{r} \to 0} \lim_{\delta\to 0}\int_{\partial \left(U_{\delta}(C_{Q})\cup \bigcup_{\ell = 1}^{r}B_{\varepsilon_{\ell}}(\varrho_{\ell})\right)\cap \mathcal{F}^{*}}f(z)\phi_{Q}(z,v)dz.
\end{split}
\end{align}
As in \eqref{eq integral evaluation}, the second term of \eqref{eq coefficient D positive with poles} can again be computed using Lemma~\ref{lemma residue theorem}. By plugging in the definition \eqref{eq xi preimage} of $\phi_{Q}(z,v)$, we can rewrite the third term of \eqref{eq coefficient D positive with poles} as
\begin{align}\label{eq coefficient D positive with poles 2}
&\frac{\sqrt{|\Delta|}}{4}\sum_{\substack{Q \in \mathcal{Q}_{|\Delta|D} \\ \varrho_{\ell} \in C_{Q} \\\text{for some $\ell$}} }\chi_{\Delta}(Q)\lim_{\varepsilon \to 0}\left(\int_{C_{Q,\varepsilon}^{+}\cap \mathcal{F}^{*}}f(z)Q(z,1)^{k-1}dz + \int_{C_{Q,\varepsilon}^{-}\cap \mathcal{F}^{*}}f(z)Q(z,1)^{k-1}dz\right)  \nonumber \\
&\quad- \frac{\sqrt{|\Delta|}}{4}\sum_{\substack{Q \in \mathcal{Q}_{|\Delta|D} \\ \varrho_{\ell} \in C_{Q} \\\text{for some $\ell$}} }\chi_{\Delta}(Q)\sum_{\substack{1\leq \ell \leq r \\ \varrho_{\ell} \in C_{Q}}}\frac{1}{\left|\overline{\Gamma}_{\varrho_{\ell}}\right|}\lim_{\varepsilon_{\ell} \to 0}\int_{\partial B_{\varepsilon_{\ell}}(\varrho_{\ell})}f(z)Q(z,1)^{k-1}\erf\left( 2\sqrt{\pi v}\frac{Q_{z}}{\sqrt{|\Delta|}}\right)dz \nonumber\\
&\quad + \frac{\sqrt{|\Delta|}}{4}\sum_{\substack{Q \in \mathcal{Q}_{|\Delta|D} \\ \varrho_{\ell} \in C_{Q} \\\text{for some $\ell$}} }\chi_{\Delta}(Q)\sum_{\substack{1\leq \ell \leq r \\ \varrho_{\ell} \notin C_{Q}}}\frac{1}{\left|\overline{\Gamma}_{\varrho_{\ell}}\right|}\lim_{\varepsilon_{\ell} \to 0}\int_{\partial B_{\varepsilon_{\ell}}(\varrho_{\ell})}f(z)\phi_{Q}(z,v)dz,
\end{align}
where $C^{\pm}_{Q,\varepsilon}$ is the modified geodesic defined in Section~\ref{section cycle integrals}. The first term of \eqref{eq coefficient D positive with poles 2} combines with the first term of \eqref{eq coefficient D positive with poles} to the $D$-th $\Delta$-twisted trace of the regularized cycle integrals of $f$. Since the integrands in the second and third line of \eqref{eq coefficient D positive with poles 2} are real-analytic near $\varrho_{\ell}$, the integrals can again be evaluated using Lemma~\ref{lemma residue theorem}. We obtain that the second and third term of \eqref{eq coefficient D positive with poles 2} combine to
\begin{align*}
 &(-4)^{1-k}\pi\sum_{\substack{Q \in \mathcal{Q}_{|\Delta|D} \\ \varrho_{\ell} \in C_{Q} \\\text{for some $\ell$}} }\chi_{\Delta}(Q)\sum_{ 1\leq \ell \leq r}\frac{1}{\left|\overline{\Gamma}_{\varrho_{1}}\right|}\sum_{n < 0}c_{f,\varrho_{\ell}}(n)\frac{\imag(\varrho_{\ell})^{-n-2k}}{(-n-1)!}\left[R_{2-2k,z}^{-n-1}\left(\phi_{Q}(z,v)\right)\right]_{z = \varrho_{\ell}}.
\end{align*}
This finishes the proof of Theorem~\ref{theorem fourier expansion}.
 
 \section{Proofs of Theorem~\ref{corollary trace formulas} and Theorem~\ref{theorem trace formulas}}
 \label{section proof trace formulas}
Throughout this section we let $\Delta = 1$ and drop it from the notation. Furthermore, let $d < 0$ be a discriminant and let $\mathcal{A} \in \mathcal{Q}_{d}/\Gamma$ be a fixed class of quadratic forms.

We consider the Shintani theta lift of the meromorphic cusp form $f_{k,\mathcal{A}}$. A short calculation verifies the formula
\[
Q(z,1) = \frac{\sqrt{|d|}}{2\imag(z_{Q})}(z-\overline{z_{Q}})^{2}X_{z_{Q}}(z)
\] 
	for every $Q \in \mathcal{Q}_{d}$. In particular, the only poles of $f_{k,\mathcal{A}}$ in $\H$ lie at the CM points $z_{Q}$ for $Q \in \mathcal{A}$, and the elliptic expansion of $f_{k,\mathcal{A}}$ at $z_{Q}$ has the shape 
	\[
	f_{k,\mathcal{A}}(z) = (z-\overline{z_{Q}})^{-2k}\left(\frac{\sqrt{|d|}}{\pi}\left(2\imag\left(z_{Q}\right)\right)^{k}X_{z_{Q}}^{-k}(z) + O(1)\right).
	\]
	Hence, by Theorem~\ref{theorem fourier expansion} the Shintani theta lift of $f_{k,\mathcal{A}}$ is given by
	\begin{equation}\label{Pf}
	\Phi_{k}(f_{k,\mathcal{A}},\tau) = \frac{1}{2}\sum_{D > 0}\tr_{f_{k,\mathcal{A}}}(D)e^{2\pi i D\tau} - \frac{2^{2-k}\sqrt{|d|}}{(k-1)!\,\left|\overline{\Gamma}_{z_{\mathcal{A}}}\right|}\left[R_{2-2k,z}^{k-1}\left(\theta_{1-k}^{*}(z,\tau)\right)\right]_{z = z_{\mathcal{A}}}.
	\end{equation}
	Furthermore, a short calculation yields
	\begin{align}\label{eq lowering fkA}
	L(\Phi_{k}(f_{k,\mathcal{A}},\tau)) = \frac{2^{1-k}\sqrt{|d|}}{(k-1)!\,\left|\overline{\Gamma}_{z_{A}}\right|}\left[R_{2-2k,z}^{k-1}\left(\Theta_{k}^{*}(z,\tau)\right)\right]_{z = z_{\mathcal{A}}}.
	\end{align}

Following \cite{bringmannkanekohnen}, we define the function
\begin{align*}
 \mathcal{F}_{1-k,D}(z) := \frac{D^{\frac{1}{2}-k}}{\binom{2k-2}{k-1}\pi}\sum_{Q \in \mathcal{Q}_{D}}\sgn(Q_{z})Q(z,1)^{k-1}\psi\left( \frac{Dy^{2}}{|Q(z,1)|^{2}}\right),
\end{align*}
where $\psi(v) \! \!:=\! \!\frac{1}{2}\beta(v;k-\frac{1}{2},\frac{1}{2} )$ is a special value of the incomplete $\beta$-function. The function $\mathcal{F}_{1-k,D}$ is a locally harmonic Maass form (in the sense of \cite{bringmannkanekohnen})  of weight $2-2k$ for $\Gamma$ with jump singularities along the geodesics $C_{Q}$ for $Q \in\mathcal{Q}_{D}$. By Theorem 1.2 of \cite{bringmannkanekohnen} the function $\mathcal{F}_{1-k,D}$ maps to a constant multiple of $f_{k,D}$ under $\xi_{2-2k}$.

\begin{proposition}\label{proposition trace formula F1-kD}
	Under the assumptions of Theorem \ref{corollary trace formulas}, we have 
	\begin{align*}
	\sum_{D > 0}a_{F}(-D)\tr_{f_{k,\mathcal{A}}}(D) &= \frac{2^{k}\sqrt{|d|}}{(k-1)!\,\left|\overline{\Gamma}_{z_{\mathcal{A}}}\right|}\sum_{D > 0}a_{F}(-D)D^{k-\frac{1}{2}}R_{2-2k}^{k-1}\left(\mathcal{F}_{1-k,D}\right)(z_{\mathcal{A}}).
	\end{align*}
\end{proposition}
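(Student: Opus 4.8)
The plan is to combine the two theta lifts via an inner product / Stokes argument, exactly as sketched in Section~\ref{section outline proof}. Recall from \cite{brikavia} (see \eqref{eq1}) that $\Phi^{*}_{k}(P_{\frac{3}{2}-k,D},z)$ is a nonzero constant multiple of $\mathcal{F}_{1-k,D}(z)$. Since the raising operator in $z$ commutes with the theta lift $\Phi_k^*$ (one may pull $R_{2-2k,z}$ under the regularized integral defining $\Phi_k^*$, as the theta kernel $\Theta_k^*(z,\tau)$ has weight $2-2k$ in $z$), applying $R_{2-2k,z}^{k-1}$ to both sides gives
\begin{align*}
\int_{\Gamma_{0}(4)\backslash \H}^{\reg}P_{\frac{3}{2}-k,D}(\tau)\,R_{2-2k,z}^{k-1}\big(\Theta_{k}^{*}(z,\tau)\big)\,\frac{du\,dv}{v^{2}} \stackrel{\bigcdot}{=} R_{2-2k}^{k-1}\big(\mathcal{F}_{1-k,D}\big)(z)
\end{align*}
as functions of $z$; I will track the exact constant (it involves $D^{k-\frac12}$ from the normalization of $P_{\frac{3}{2}-k,D}$ and factorials from the iterated raising operator). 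Evaluating at $z=z_{\mathcal{A}}$ and using \eqref{eq lowering fkA} to replace $[R_{2-2k,z}^{k-1}(\Theta_k^*(z,\tau))]_{z=z_{\mathcal{A}}}$ by $\frac{(k-1)!\,|\overline{\Gamma}_{z_{\mathcal{A}}}|}{2^{1-k}\sqrt{|d|}}L(\Phi_k(f_{k,\mathcal{A}},\tau))$, we obtain
\begin{align*}
\int_{\Gamma_{0}(4)\backslash \H}^{\reg}P_{\frac{3}{2}-k,D}(\tau)\,L\big(\Phi_{k}(f_{k,\mathcal{A}},\tau)\big)\,\frac{du\,dv}{v^{2}} \stackrel{\bigcdot}{=} R_{2-2k}^{k-1}\big(\mathcal{F}_{1-k,D}\big)(z_{\mathcal{A}}).
\end{align*}

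The next step is to evaluate the left-hand integral directly in terms of the Fourier coefficients of $\Phi_k(f_{k,\mathcal{A}},\tau)$. Here I would use the standard unfolding/Stokes argument: since $L(\Phi_k(f_{k,\mathcal{A}},\tau))$ has weight $k-\frac12$ and $P_{\frac{3}{2}-k,D}$ has weight $\frac32-k$, the product $P_{\frac{3}{2}-k,D}(\tau)\,\overline{\overline{L(\Phi_k(f_{k,\mathcal{A}},\tau))}}$ — more precisely $P_{\frac{3}{2}-k,D}(\tau) L(\Phi_k)(\tau) v^{k-3/2}$ — is $\Gamma_0(4)$-invariant, and one writes $L\Phi_k = -2iv^2\overline{\partial_{\overline\tau}\overline{\Phi_k}}$ to integrate by parts against the Poincaré series. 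Unfolding $P_{\frac{3}{2}-k,D}$ against a fundamental domain strip reduces the integral to the $D$-th Fourier coefficient of $\Phi_k(f_{k,\mathcal{A}},\tau)$; the non-holomorphic term in $\Phi_k$ (the second summand in \eqref{Pf}, which lies in the kernel of no operator but is annihilated in the relevant pairing because $P_{\frac{3}{2}-k,D}$ is weakly holomorphic and the regularization kills its contribution) drops out, leaving precisely $\frac12\tr_{f_{k,\mathcal{A}}}(D)$ up to the explicit constant. This identifies
\begin{align*}
\tr_{f_{k,\mathcal{A}}}(D) \stackrel{\bigcdot}{=} D^{k-\frac12} R_{2-2k}^{k-1}\big(\mathcal{F}_{1-k,D}\big)(z_{\mathcal{A}}),
\end{align*}
and I would then pin down the constant to be exactly $\frac{2^{k}\sqrt{|d|}}{(k-1)!\,|\overline{\Gamma}_{z_{\mathcal{A}}}|}$ by carefully composing the constants from \eqref{eq1}, \eqref{eq lowering fkA}, the normalization $P_{\frac{3}{2}-k,D}=\frac23 q^{-D}+O(1)$, and the Stokes computation.

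Finally, to pass from the single-$D$ identity to the sum $\sum_{D>0}a_F(-D)\tr_{f_{k,\mathcal{A}}}(D)$, I take the weakly holomorphic form $F$ of weight $\frac32-k$ with principal part $\sum_{D>0}a_F(-D)q^{-D}$ (this is a finite linear combination of the Poincaré series $P_{\frac{3}{2}-k,D}$ up to a cusp form, and the hypothesis that $a_F(-D)=0$ for square $D$ together with the Kohnen plus-space/Shimura-type vanishing ensures the pairing with $L(\Phi_k(f_{k,\mathcal{A}},\tau))$ is unaffected by the holomorphic ambiguity); summing the single-$D$ identities with weights $a_F(-D)$ gives the proposition. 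The main obstacle I anticipate is the bookkeeping in the Stokes/unfolding step: making sure the regularized integral genuinely converges and isolates only the $D$-th holomorphic Fourier coefficient of $\Phi_k(f_{k,\mathcal{A}},\tau)$ — i.e., that the contribution of the non-holomorphic part of $\Phi_k$ and of the growth of $P_{\frac{3}{2}-k,D}$ at the cusps of $\Gamma_0(4)$ other than $\infty$ really vanish — and tracking the cumulative nonzero constant precisely rather than up to $\stackrel{\bigcdot}{=}$, since the final statement requires the exact value.
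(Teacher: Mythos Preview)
Your outline matches the paper's strategy, but there are two genuine gaps that would prevent the argument from going through as written.

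First, $P_{\frac{3}{2}-k,D}$ is \emph{not} weakly holomorphic: it is a harmonic Maass form whose image under $\xi_{\frac{3}{2}-k}$ is a (generally nonzero) cusp form of weight $k+\tfrac12$. Hence a Stokes argument applied to a single $D$ produces an extra bulk term from $\xi_{\frac{3}{2}-k}(P_{\frac{3}{2}-k,D})$ paired against $\Phi_k(f_{k,\mathcal{A}},\tau)$, and the single-$D$ identity $\tr_{f_{k,\mathcal{A}}}(D) \stackrel{\bigcdot}{=} D^{k-\frac12}R_{2-2k}^{k-1}(\mathcal{F}_{1-k,D})(z_{\mathcal{A}})$ simply fails unless $S_{2k}=\{0\}$. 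The paper avoids this by first passing to the finite linear combination $F = \tfrac{3}{2}\sum_{D>0}a_F(-D)P_{\frac{3}{2}-k,D}$, which \emph{is} holomorphic on $\H$ under the hypotheses of Theorem~\ref{corollary trace formulas}, and only then applying Stokes (in the form of Lemma~2 of \cite{ditreal}); holomorphy of $F$ is exactly what kills the bulk term. So the order of operations you propose (single $D$ first, then sum) must be reversed.

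Second, the non-holomorphic piece of $\Phi_k(f_{k,\mathcal{A}},\tau)$ in \eqref{Pf} does not drop out of the boundary term by any abstract ``regularization'' mechanism. After Stokes one obtains a line integral at height $v=T$ picking out the constant $u$-coefficient of $F\cdot\Phi_k(f_{k,\mathcal{A}},\cdot)$ (together with contributions at the other cusps of $\Gamma_0(4)$, handled via the $F^{\even}$, $F^{\odd}$ decomposition). The non-holomorphic piece contributes sums of $[R_{2-2k,z}^{k-1}(\phi_Q(z,T))]_{z=z_{\mathcal{A}}}$, and one checks these have the shape $P_1(\sqrt{T})\erfc(2\sqrt{\pi T}\,|Q_{z_{\mathcal{A}}}|)+P_2(\sqrt{T})e^{-4\pi T Q_{z_{\mathcal{A}}}^2}$ for polynomials $P_1,P_2$. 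These vanish as $T\to\infty$ \emph{only} because $Q_{z_{\mathcal{A}}}\neq 0$ for every relevant $Q$, i.e.\ precisely because of the hypothesis that $z_{\mathcal{A}}$ lies on no geodesic $C_Q$ with $Q\in\mathcal{Q}_D$ and $a_F(-D)\neq 0$. That hypothesis must be invoked explicitly here; without it the limit does not exist.
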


\begin{proof}
	Let $P_{\frac{3}{2}-k,D}$ be the unique harmonic Maass form of weight $\frac{3}{2}-k$ for $\Gamma_{0}(4)$ satisfying the Kohnen plus space condition which maps to a cusp form of weight $k+\frac{1}{2}$ under the $\xi$-operator and which has a Fourier expansion of the form $\frac{2}{3}e^{-2\pi i D\tau } + O(1)$ at $i \infty$ (see \cite{bringmannono} for its explicit construction as a Poincar\'e series). Then we obtain that $F = \frac{3}{2}\sum_{D > 0}a_{F}(-D)P_{\frac{3}{2}-k,D}$. By Theorem~1.3~(2) of \cite{brikavia}, we have
	\begin{align}\label{eq integral PD 1}
	2^{2k-2}D^{k-\frac{1}{2}}\mathcal{F}_{1-k,D}(z) = \lim_{T \to \infty}\int_{\mathcal{F}_{T}(4)}P_{\frac{3}{2}-k,D}(\tau)\Theta_{k}^{*}(z,\tau)\frac{du dv}{v^{2}},
	\end{align}
	where 
	\[
	\mathcal{F}_{T}(4) := \bigcup_{M \in \Gamma_{0}(4)\backslash \Gamma}M\mathcal{F}_{T}, \qquad \mathcal{F}_{T} := \left\{\tau = u+iv \in \H: |u| \leq \frac{1}{2}, |\tau| \geq 1, v \leq T\right\},
	\]
	is a truncated fundamental domain for $\Gamma_{0}(4)\backslash \H$. Note that the normalization of the functions $P_{\frac{3}{2}-k,D}$ and $\mathcal{F}_{1-k,D}$ in \cite{brikavia} differs from the normalization used in this paper, which explains the different constants when comparing equation \eqref{eq integral PD 1} to Theorem~1.3~(2) in \cite{brikavia}.

 Now we apply the iterated raising operator and plug $z = z_{\mathcal{A}}$ into \eqref{eq integral PD 1}. A standard argument involving the dominated convergence theorem shows that, for $z_{\mathcal{A}}$ not lying on any of the geodesics $C_{Q}$ for $Q \in \mathcal{Q}_{D}$, we have
	\begin{align*}
	2^{2k-2}D^{k-\frac{1}{2}}R_{2-2k}^{k-1}\left(\mathcal{F}_{1-k,D}\right)(z_{\mathcal{A}}) = &\lim_{T \to \infty}\int_{\mathcal{F}_{T}(4)}P_{\frac{3}{2}-k,D}(\tau)\left[R_{2-2k,z}^{k-1}\left(\Theta_{k}^{*}(z,\tau)\right)\right]_{z = z_{\mathcal{A}}}\frac{du dv}{v^{2}}.
	\end{align*}
	Using \eqref{eq lowering fkA}, Stokes' Theorem in the form given in Lemma~2 in \cite{ditreal}, and the fact that $F = \frac{3}{2}\sum_{D > 0}a_{F}(-D)P_{\frac{3}{2}-k,D}$ is holomorphic on $\H$, we obtain
	\begin{align} \label{inteo}
	&\frac{2^{k-1}\sqrt{|d|}}{(k-1)!\,\left|\overline{\Gamma}_{z_{\mathcal{A}}}\right|}\sum_{D > 0}a_{F}(-D)D^{k-\frac{1}{2}}R_{2-2k}^{k-1}\left(\mathcal{F}_{1-k,D}\right)(z_{\mathcal{A}})\notag \\
	&\quad =\frac{2}{3} \lim_{T \to \infty}\int_{\mathcal{F}_{T}(4)}F(\tau)\overline{\xi_{k+\frac{1}{2}}\left(\Phi_{k}(f_{k,\mathcal{A}},
	\tau)\right)}v^{\frac{3}{2}-k}\frac{dudv}{v^{2}}\notag \\
	&\quad = \frac{2}{3} \lim_{T \to \infty}\int_{-\frac{1}{2}+iT}^{\frac{1}{2}+iT}\left(F(\tau)\Phi_{k}(f_{k,\mathcal{A}},\tau)+ \frac{1}{2}F^{\even}(\tau)\Phi_{k}^{\even}(f_{k,\mathcal{A}},\tau) + \frac{1}{2}F^{\odd}(\tau)\Phi_{k}^{\odd}(f_{k,\mathcal{A}},\tau)\right) d\tau.
	\end{align}
	Here we set, for $f(\tau) = \sum_{D \in \Z}a(v,D)e^{2\pi i D \tau}$,
	\[
	f^{\even}(\tau) := \sum_{\substack{D \in \Z \\ D \equiv  { 0 \pmod{2}  }}}a\left(\frac{v}{4},D\right)e^{2\pi iD\frac{\tau}{4}}, \qquad f^{\odd}(\tau) := \sum_{\substack{D \in \Z \\ D \equiv {1 \pmod{2} }}}a\left(\frac{v}{4},D\right)e^{2\pi i \frac{D}{8}}e^{2\pi i D\frac{\tau}{4}}.
	\]
	The integral in \eqref{inteo} picks out the constant term in the Fourier expansion of the integrand. Using \eqref{Pf} we get that \eqref{inteo} equals 
	\begin{multline}\label{eq integral fkA 2}
	 \frac{1}{2}\sum_{D > 0}a_{F}(-D)\tr_{f_{k,\mathcal{A}}}(D) - \!\!\lim_{T \to \infty} \frac{2^{3-k}\sqrt{|d|}}{3(k-1)!\,\left|\overline{\Gamma}_{z_{\mathcal{A}}}\right|}\!\!\sum_{D \in \Z}a_{F}(-D)\!\!\sum_{Q \in \mathcal{Q}_{D}}\!\!\left[R_{2-2k,z}^{k-1}\left(\phi_{Q}(z,T) \right)\right]_{z = z_{\mathcal{A}}}\\
	 - \!\!\lim_{T \to \infty} \frac{2^{2-k}\sqrt{|d|}}{3(k-1)!\,\left|\overline{\Gamma}_{z_{\mathcal{A}}}\right|}\!\!\sum_{D \in \Z}a_{F}(-D)\!\!\sum_{Q \in \mathcal{Q}_{D}}\!\!\left[R_{2-2k,z}^{k-1}\left(\phi_{Q}\left(z,\tfrac{T}{4}\right)\right)\right]_{z = z_{\mathcal{A}}}\!\!.
	\end{multline}
	One can show that
	\[
	\left[R_{2-2k,z}^{k-1}\left(\phi_{Q}(z,T) \right)\right]_{z = z_{\mathcal{A}}} = P_{1}\left(\sqrt{T}\right)\erfc\left(2\sqrt{\pi T}|Q_{z_{\mathcal{A}}}|\right) + P_{2}\left(\sqrt{T}\right)e^{-4\pi T Q_{z_{\mathcal{A}}}^{2}},
	\]
	where $P_{1}$ and $P_{2}$ are polynomials. For $D \leq 0$ and $Q \in\mathcal{Q}_{D}\backslash \{[0,0,0]\}$ we have $Q_{z}^{2} > 0$ for all $z \in \H$, for $Q = [0,0,0]$ we have $\phi_{Q}(z,v) = 0$ by definition, and for those $Q \in \mathcal{Q}_{D}$ with $D > 0$ appearing above (i.e., $a_{F}(-D) \neq 0$) we have $Q_{z_{\mathcal{A}}}^{2} > 0$ since $z_{\mathcal{A}}$ does not lie on any geodesic $C_{Q}$ for $Q \in \mathcal{Q}_{D}$ with $a_{F}(-D) \neq 0$, by assumption. Since $\erfc(C\sqrt{T})$ and $e^{-CT}$ for $C > 0$ are rapidly decreasing as $T \to \infty$, the limits on the right-hand side of \eqref{eq integral fkA 2} vanish, and we obtain the formula stated in the proposition.
\end{proof}

	By Theorem~7.1 in \cite{bringmannkanekohnen}, the function $\mathcal{F}_{1-k,D}$ is locally a polynomial if $S_{2k} = \{0\}$. More generally, by taking suitable linear combinations, we get the following result.

	\begin{lemma}\label{lemma local polynomial} Assuming the conditions as in Theorem~\ref{corollary trace formulas}, we have
	\begin{multline}\label{eq local polynomial}
	\sum_{D > 0}a_{F}(-D)D^{k-\frac{1}{2}}\mathcal{F}_{1-k,D}(z)\\
	= \sum_{D > 0}a_{F}(-D)\left(-\frac{c_{k}(D)}{2^{k}\binom{2k-2}{k-1}}+ 2^{3-2k}\!\!\!\!\!\sum_{\substack{Q =[a,b,c]\in \mathcal{Q}_{D} \\ Q_{z} > 0 > a}}Q(z,1)^{k-1}\right),
	\end{multline}	
	where $c_{k}(D)$ is the constant defined in \eqref{eq ckD}.
	\end{lemma}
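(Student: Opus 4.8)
The plan is to reduce \eqref{eq local polynomial} to Theorem~7.1 of \cite{bringmannkanekohnen} by linearizing in the discriminant. Write
\[
\mathcal{P}_{D}(z):=-\frac{c_{k}(D)}{2^{k}\binom{2k-2}{k-1}}+2^{3-2k}\sum_{\substack{Q=[a,b,c]\in\mathcal{Q}_{D}\\ Q_{z}>0>a}}Q(z,1)^{k-1}
\]
for the $D$-th summand on the right-hand side of \eqref{eq local polynomial}, and set $h(z):=\sum_{D>0}a_{F}(-D)(D^{k-\frac{1}{2}}\mathcal{F}_{1-k,D}(z)-\mathcal{P}_{D}(z))$, so that the lemma is the assertion $h\equiv 0$. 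By hypothesis $a_{F}(-D)=0$ whenever $D$ is a square, hence only non-square $D$ occur in $h$; for these every $Q\in\mathcal{Q}_{D}$ has $a\neq 0$, which makes $\mathcal{P}_{D}$ locally a finite sum and avoids the degenerate vertical geodesics. When $S_{2k}=\{0\}$, Theorem~7.1 of \cite{bringmannkanekohnen} is precisely the identity $D^{k-\frac{1}{2}}\mathcal{F}_{1-k,D}=\mathcal{P}_{D}$; for general even $k$ I would invoke the structural results of \cite{bringmannkanekohnen} in the form
\[
D^{k-\frac{1}{2}}\mathcal{F}_{1-k,D}(z)=\mathcal{P}_{D}(z)+D^{k-\frac{1}{2}}\mathcal{F}^{-}_{1-k,D}(z),
\]
where $\mathcal{F}^{-}_{1-k,D}$ is the non-holomorphic part of the locally harmonic Maass form $\mathcal{F}_{1-k,D}$, i.e.\ the non-holomorphic Eichler integral of the cusp form $\xi_{2-2k}(\mathcal{F}_{1-k,D})=c_{D}f_{k,D}\in S_{2k}$ (Theorem~1.2 of \cite{bringmannkanekohnen}; here $c_{D}$ is an explicit elementary constant, and $c_{k}(D)$ of \eqref{eq ckD} is the constant term of $D^{k-\frac{1}{2}}\mathcal{F}_{1-k,D}$ at $i\infty$ up to the indicated factor). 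Thus $h=\sum_{D>0}a_{F}(-D)D^{k-\frac{1}{2}}\mathcal{F}^{-}_{1-k,D}$.

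Next I would argue that $h$ is a holomorphic cusp form of weight $2-2k$ for $\Gamma$; since $2-2k<0$ this forces $h\equiv 0$ and proves the lemma. Three points enter. First, $h(z)\to 0$ as $\imag(z)\to\infty$, because each $\mathcal{F}^{-}_{1-k,D}$ decays exponentially (being the non-holomorphic Eichler integral of a cusp form), so in particular $h$ has no pole at $i\infty$. Second, $\xi_{2-2k}(h)=\sum_{D>0}a_{F}(-D)D^{k-\frac{1}{2}}c_{D}f_{k,D}$, so $h$ is holomorphic on $\H$ as soon as this sum vanishes. Third, $\mathcal{F}^{-}_{1-k,D}$ transforms under $\Gamma$ with an error term which is a polynomial in $z$ depending linearly on $c_{D}f_{k,D}$ (its period polynomial), so $h$ is modular precisely when the same sum $\sum_{D>0}a_{F}(-D)D^{k-\frac{1}{2}}c_{D}f_{k,D}$ vanishes. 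Everything therefore reduces to proving
\[
\sum_{D>0}a_{F}(-D)D^{k-\frac{1}{2}}c_{D}f_{k,D}(z)=0\qquad(z\in\H).
\]

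To establish this vanishing, the key input is that, for fixed $z$, the series $\tau\mapsto\sum_{D>0}D^{k-\frac{1}{2}}c_{D}f_{k,D}(z)q^{D}$ is --- up to a normalization matching the $\xi$-image above --- the Kohnen--Zagier kernel of the Shimura correspondence (cf.\ \cite{kohnenzagierrationalperiods}), hence a cusp form of weight $k+\frac{1}{2}$ for $\Gamma_{0}(4)$ in the Kohnen plus space. Since $F$ is a \emph{weakly holomorphic} modular form of the dual weight $\frac{3}{2}-k=2-(k+\frac{1}{2})$ in the plus space whose principal-part coefficients $a_{F}(-D)$ vanish on squares, the standard pairing between $M^{!}_{\frac{3}{2}-k}$ and $S_{k+\frac{1}{2}}$ (principal parts against Fourier coefficients) gives $\sum_{D>0}a_{F}(-D)b(D)=0$ for every $\sum_{D>0}b(D)q^{D}\in S_{k+\frac{1}{2}}$ in the plus space; applying this at each $z$ to the Kohnen--Zagier kernel yields the claim. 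Alternatively, the same vanishing follows from $\xi_{\frac{3}{2}-k}(F)=0$ (valid because $F$ is weakly holomorphic) together with $F=\frac{3}{2}\sum_{D>0}a_{F}(-D)P_{\frac{3}{2}-k,D}$, as in the proof of Proposition~\ref{proposition trace formula F1-kD}, and the fact that the theta lift attached to $\Theta_{k}^{*}$ sends $\xi_{\frac{3}{2}-k}(P_{\frac{3}{2}-k,D})$ to a multiple of $f_{k,D}$.

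I expect the main obstacle to be the bookkeeping in the first paragraph: extracting from \cite{bringmannkanekohnen} the precise holomorphic part of $\mathcal{F}_{1-k,D}$ for general $k$ --- in particular that its constant term at $i\infty$ is the $L$-value combination $c_{k}(D)$ of \eqref{eq ckD} --- together with the verification, via a short computation with the incomplete $\beta$-function $\psi$ (using $|Q(z,1)|^{2}=D\,\imag(z)^{2}$ along $C_{Q}$, the value $\psi(1)=\frac{1}{2}B(k-\frac{1}{2},\frac{1}{2})$, the Legendre duplication formula, and the $Q\leftrightarrow-Q$ symmetry, which for even $k$ makes both forms contribute equally), that the jump of $D^{k-\frac{1}{2}}\mathcal{F}_{1-k,D}$ across each geodesic $C_{Q}$ equals $2^{3-2k}Q(z,1)^{k-1}$, i.e.\ exactly that of $\mathcal{P}_{D}$. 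Pinning down the normalization $D^{k-\frac{1}{2}}c_{D}$ so that the series in the third paragraph is literally the kernel the duality applies to is then routine once the relevant results of \cite{bringmannkanekohnen,brikavia} are invoked in the correct normalization.
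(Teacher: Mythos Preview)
Your overall strategy coincides with the paper's: invoke the decomposition of $\mathcal{F}_{1-k,D}$ from Theorem~7.1 of \cite{bringmannkanekohnen}, and then kill the Eichler-integral contributions by proving that the linear combination $\sum_{D>0}a_F(-D)\,(\text{const})\,f_{k,D}$ vanishes in $S_{2k}$, using either Serre duality against the Kohnen--Zagier kernel or, equivalently, the Shimura theta lift identity $\Phi_k^*(\xi_{\frac32-k}P_{\frac32-k,D})\doteq f_{k,D}$ together with $\xi_{\frac32-k}(F)=0$. The paper uses precisely the second of your two alternatives.

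There is one genuine inaccuracy in your writeup. The difference $D^{k-\frac12}\mathcal{F}_{1-k,D}-\mathcal{P}_D$ is \emph{not} just the non-holomorphic Eichler integral of $c_D f_{k,D}$: Theorem~7.1 of \cite{bringmannkanekohnen} gives
\[
\mathcal{F}_{1-k,D}(z)=\frac{D^{-\frac{k+1}{2}}}{\binom{2k-2}{k-1}}f_{k,D}^{*}(z)+D^{-\frac{k+1}{2}}\frac{(k-1)!^{2}}{(4\pi)^{2k-1}}\mathcal{E}_{f_{k,D}}(z)+\mathcal{P}_{1-k,D}(z),
\]
so there is also a \emph{holomorphic} Eichler integral $\mathcal{E}_{f_{k,D}}$ present. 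Your identification ``$h=\sum a_F(-D)D^{k-\frac12}\mathcal{F}^{-}_{1-k,D}$'' is therefore wrong as stated. Fortunately your three-step argument (decay at $i\infty$, $\xi_{2-2k}(h)$ equals the cusp-form combination, modularity defect is a period polynomial linear in that combination) applies verbatim to the correct $h$, since $\mathcal{E}_{f_{k,D}}$ is holomorphic, decays at $i\infty$, and has a period-polynomial obstruction that is again linear in $f_{k,D}$. So the conclusion survives.

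That said, once you know $\sum_{D>0}a_F(-D)D^{\frac{k}{2}-1}f_{k,D}=0$, the paper's route is shorter: both $f\mapsto f^*$ and $f\mapsto\mathcal{E}_f$ are linear, so the corresponding combinations of $f_{k,D}^*$ and $\mathcal{E}_{f_{k,D}}$ vanish immediately, and no ``negative-weight cusp form'' argument is needed. Your detour through holomorphicity and modularity of $h$ is correct but unnecessary.
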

	
	\begin{proof}
		Recall that the non-holomorphic and holomorphic Eichler integrals of a cusp form $f(\tau) = \sum_{n\geq 1}a_{f}(n)q^{n} \in S_{2k}$ are defined by
		\[
		f^{*}(z) := (-2i)^{1-2k}\int_{-\overline{z}}^{i\infty}\overline{f(-\overline{\tau})}(\tau+z)^{2k-2}d\tau, \qquad \mathcal{E}_{f}(z) := \sum_{n\geq1}\frac{a_{f}(n)}{n^{2k-1}}q^{n}.
		\]	
		By Theorem 7.1 of \cite{bringmannkanekohnen} the locally harmonic Maass form $\mathcal{F}_{1-k,D}(z)$ decomposes as a sum
		\begin{align}\label{FDecomposition}
		\mathcal{F}_{1-k,D}(z) = \frac{D^{-\frac{k+1}{2}}}{\binom{2k-2}{k-1}}f_{k,D}^{*}(z)+D^{-\frac{k+1}{2}}\frac{(k-1)!^{2}}{(4\pi)^{2k-1}}\mathcal{E}_{f_{k,D}}(z) + \mathcal{P}_{1-k,D}(z),
		\end{align}
		where
		\[
		\mathcal{P}_{1-k,D}(z) := -\frac{c_{k}(D)}{2^{k}\binom{2k-2}{k-1}}+ 2^{3-2k}\!\!\!\!\!\sum_{\substack{Q =[a,b,c]\in \mathcal{Q}_{D} \\ Q_{z} > 0 > a}}Q(z,1)^{k-1}
		\]
		is locally a polynomial. Note that the constant $c_{\infty}$ defined in (7.3) of \cite{bringmannkanekohnen} is missing a factor $2$, which is corrected in the above formula.

		By Theorem~1.1~(2) of \cite{brikavia}, we have
		\[
		\frac{2^{2k-3}}{6\binom{2k-2}{k-1}}D^{\frac{k}{2}-1}f_{k,D}(z) = \Phi_{k}^{*}\left(\xi_{\frac{3}{2}-k}\left(P_{\frac{3}{2}-k,D}\right),z\right),
		\]
		where
		\[
		\Phi_{k}^{*}(f,z) : =\frac{1}{6}\lim_{T \to \infty}\int_{\mathcal{F}_{T}(4)}f(\tau)\overline{\Theta_{k}(z,\tau)}v^{k+\frac{1}{2}}\frac{dudv}{v^{2}}
		\]
		is the Shimura theta lift of a cusp form $f$ of weight $k+\frac{1}{2}$ for $\Gamma_{0}(4)$. Using that $F = \frac{3}{2}\sum_{D > 0}a_{F}(-D)P_{\frac{3}{2}-k,D}$ is holomorphic on $\H$, we find that
		\begin{align*}
		\frac{2^{2k-3}}{6\binom{2k-2}{k-1}}\sum_{D > 0}a_{F}(-D)D^{\frac{k}{2}-1}f_{k,D}(z) &= \Phi_{k}^{*}\left(\xi_{\frac{3}{2}-k}\left(\sum_{D > 0}a_{F}(-D)P_{\frac{3}{2}-k,D}\right),z\right) = 0,
		\end{align*}
		which implies that the analogous linear combinations of the non-holomorphic and holomorphic Eichler integrals $f_{k,D}^{*}$ and $\mathcal{E}_{f_{k,D}}$ in the decomposition \eqref{FDecomposition} vanish. This yields the stated formula.
		\end{proof}
	
	In view of Proposition~\ref{proposition trace formula F1-kD}, we need to apply the iterated raising operator $R_{2-2k}^{k-1}$ to the right-hand side of \eqref{eq local polynomial}. The following lemma is well-known and not hard to prove.
	\begin{lemma}\label{lemma raising constant}
		For $k \in \N$ we have
		\[
		R_{2-2k}^{k-1}(1) = (-1)^{k+1}(k-1)!\binom{2k-2}{k-1} y^{1-k}.
		\]
	\end{lemma}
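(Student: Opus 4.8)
The plan is a direct computation based on the action of a single raising operator on powers of $y$. First I would record the elementary identity
\[
R_{\kappa}\left(y^{s}\right) = \left(2i\frac{\partial}{\partial z}+\frac{\kappa}{y}\right)y^{s} = (s+\kappa)\,y^{s-1},
\]
which follows from $\frac{\partial y}{\partial z} = \frac{1}{2i}$ and holds for any $s \in \C$ and $\kappa \in \Z$. Since $1 = y^{0}$, repeated application of this identity shows that each intermediate function obtained from applying raising operators to $1$ is a constant multiple of a negative power of $y$.

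Next I would make this precise by induction. Writing $R_{2-2k}^{j}(1) = a_{j}\,y^{-j}$ with $a_{0} = 1$, recall that the definition $R_{\kappa}^{n} = R_{\kappa+2n-2}\circ\cdots\circ R_{\kappa}$ means that passing from $j$ applied operators to $j+1$ applies the operator $R_{2-2k+2j}$. By the identity above,
\[
a_{j+1}\,y^{-j-1} = R_{2-2k+2j}\left(a_{j}\,y^{-j}\right) = a_{j}\left(-j + 2-2k+2j\right)y^{-j-1} = a_{j}\,(j+2-2k)\,y^{-j-1},
\]
so that $a_{j+1} = (j+2-2k)\,a_{j}$, and hence $a_{k-1} = \prod_{j=0}^{k-2}(j+2-2k)$ and $R_{2-2k}^{k-1}(1) = a_{k-1}\,y^{1-k}$.

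Finally I would simplify this product. Each of the $k-1$ factors is negative, the factor for $j=0$ being $2-2k$ and the factor for $j = k-2$ being $-k$; pulling out the common sign and reversing the order of the factors gives
\[
a_{k-1} = (-1)^{k-1}\prod_{j=0}^{k-2}(2k-2-j) = (-1)^{k-1}\,(2k-2)(2k-3)\cdots k = (-1)^{k+1}\frac{(2k-2)!}{(k-1)!},
\]
using $(-1)^{k-1} = (-1)^{k+1}$. Since $\binom{2k-2}{k-1} = \frac{(2k-2)!}{(k-1)!^{2}}$, this equals $(-1)^{k+1}(k-1)!\binom{2k-2}{k-1}$, as claimed; for $k=1$ the product is empty and both sides equal $1$. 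There is no genuine obstacle here: the only point requiring a moment's care is the bookkeeping of which raising operator is applied at each step of the composition, namely that the $(j+1)$-st operator in $R_{2-2k}^{k-1}$ is $R_{2-2k+2j}$ rather than $R_{2-2k}$; once this is pinned down, the computation is a telescoping product.
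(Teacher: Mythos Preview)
Your proof is correct. The paper does not actually give a proof of this lemma, remarking only that it is well-known and not hard to prove, so there is no approach to compare against; your direct induction using $R_{\kappa}(y^{s}) = (s+\kappa)y^{s-1}$ is exactly the kind of elementary computation the authors have in mind.
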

	
	Next, we compute the iterated raising operator applied to $Q(z,1)^{k-1}$.

	\begin{lemma}\label{lemma raising Q}
		For $k \in \N$ and $Q \in \mathcal{Q}_{D}$ with $D > 0$ we have
		\begin{align*}
		R_{2-2k}^{k-1}\left( Q(z,1)^{k-1}\right) = \left(2i\sqrt{D}\right)^{k-1}(k-1)!P_{k-1}\left(\frac{iQ_{z}}{\sqrt{D}} \right).
		\end{align*}
	\end{lemma}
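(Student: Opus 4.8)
The plan is to reduce the identity, by $\SL_2(\R)$-equivariance in the form $Q$, to a single model form, and then to carry out the iterated raising explicitly. First I would record a closed formula for the iterated raising operator on holomorphic functions: using $\partial_z(y^{-1}) = i/(2y^2)$ one shows by induction on $n$ that for holomorphic $f$
\[
R_\kappa^n(f) = \sum_{r=0}^n \binom{n}{r}\left(\prod_{j=r}^{n-1}(\kappa+j)\right)\frac{(2i)^r}{y^{n-r}}\,\partial_z^r f,
\]
the induction step being the elementary identity $(\kappa+n+r)\binom{n}{r}+(\kappa+r-1)\binom{n}{r-1}=(\kappa+n)\binom{n+1}{r}$. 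For $\kappa=2-2k$ and $n=k-1$ the products $\prod_{j=r}^{k-2}(2-2k+j)$ are read as literal finite products, so no poles of the Gamma function enter.

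Next I would note that both sides of the claimed identity transform in the same (weight $0$) way under $\SL_2(\R)$ acting on $Q$: the map $Q\mapsto Q\circ g$ acts on $z\mapsto Q(z,1)^{k-1}$ by the weight $2-2k$ slash, and $R_\kappa$ intertwines the weight $\kappa$ and weight $\kappa+2$ slash actions of $\SL_2(\R)$, so $R_{2-2k}^{k-1}$ sends the weight $2-2k$ action to the weight $0$ one; on the other side $Q_z$, hence $P_{k-1}(iQ_z/\sqrt{D})$, is $\SL_2(\R)$-invariant. The identity is analytic in $z$ and makes sense for every real binary quadratic form of discriminant $D$, and $\SL_2(\R)$ acts transitively on such forms, so it is enough to prove it for the model form $Q_0=[0,\sqrt{D},0]$, for which $Q_0(z,1)=\sqrt{D}\,z$ and $(Q_0)_z=\sqrt{D}\,x/y$.

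Finally, for $Q_0$ the closed formula together with $\partial_z^r(z^{k-1})=\tfrac{(k-1)!}{(k-1-r)!}z^{k-1-r}$ expresses $R_{2-2k}^{k-1}(Q_0(z,1)^{k-1})$ as an explicit polynomial in $z/y$; reindexing by $m=k-1-r$, the product collapses to $(-1)^m(k+m-1)!/(k-1)!$ and one obtains
\[
R_{2-2k}^{k-1}\!\left(Q_0(z,1)^{k-1}\right)=D^{\frac{k-1}{2}}(2i)^{k-1}(k-1)!\sum_{m=0}^{k-1}\binom{k-1}{m}\binom{k-1+m}{m}\left(\frac{iz}{2y}\right)^m,
\]
which equals $D^{\frac{k-1}{2}}(2i)^{k-1}(k-1)!\,P_{k-1}(1+iz/y)$ by the Taylor expansion $P_n(1+t)=\sum_m\binom{n}{m}\binom{n+m}{m}(t/2)^m$ of the Legendre polynomial about $1$. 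Since $1+iz/y=ix/y=i(Q_0)_z/\sqrt{D}$ and $D^{\frac{k-1}{2}}(2i)^{k-1}=(2i\sqrt{D})^{k-1}$, this is the asserted value. The main obstacles are the bookkeeping in this last step --- the reindexing and the recognition of the Legendre coefficients --- and checking that the $\SL_2(\R)$-equivariance of the left-hand side comes out with exactly the right automorphy factors; the rest is routine.
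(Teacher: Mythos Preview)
Your proof is correct and follows essentially the same route as the paper's: reduce by $\SL_2(\R)$-equivariance to the model form $Q_0=[0,\sqrt{D},0]$, expand $R_{2-2k}^{k-1}$ via the closed formula for iterated raising of holomorphic functions, and recognize the resulting sum as the expansion of $P_{k-1}$ about $1$. The only cosmetic differences are that the paper cites the raising formula from Zagier's part of \cite{zagier123} and the Legendre identity from \cite{abramowitz}, whereas you derive the former by induction and state the latter directly.
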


	\begin{proof}
		Since $D > 0$ there exists a matrix $M \in \SL_{2}(\R)$ such that $Q\circ M= [0,\sqrt{D},0]$. Using that $Q(z,1)|_{-2}M = (Q\circ M)(z,1), Q_{Mz} = (Q\circ M)_{z}$, and the fact that the slash operator commutes with the raising operator, we can assume without loss of generality  that $Q = [0,\sqrt{D},0]$. Then $Q(z,1) = \sqrt{D}z$ and $Q_{z} = \sqrt{D}\frac{x}{y}$. We rewrite the iterated raising operator using formula (56) in Zagier's part in \cite{zagier123} to get
		\begin{align*}
		R_{2-2k}^{k-1}\left(z^{k-1}\right) &= (-4\pi)^{k-1}\sum_{j = 0}^{k-1}(-1)^{k+1+j}\binom{k-1}{j}\frac{(2-2k+j)_{k-1-j}}{(4\pi y)^{k-1-j}(2\pi i)^{j}}\frac{\partial^{j}}{\partial z^{j}}z^{k-1} \\
		&=\sum_{j = 0}^{k-1} (2i)^j\binom{k-1}{j}\frac{(2-2k+j)_{k-1-j}}{y^{k-1-j}}\frac{(k-1)!}{(k-1-j)!}z^{k-1-j},
		\end{align*}
		where $(a)_{m} := a(a+1)\cdots (a+m-1)$ is the Pochhammer symbol. We replace $j \mapsto k-1-j$ and obtain, after some simplification, that this equals 
		\begin{align*}
		&\left(2i\right)^{k-1}(k-1)!\sum_{j = 0}^{k-1}\binom{k-1}{j}\binom{k-1+j}{j}\left(\frac{\frac{ix}{y}-1}{2}\right)^{j} = \left(2i\right)^{k-1}(k-1)!P_{k-1}\left(\frac{ix}{y} \right),
		\end{align*} 
		where we use a formula for the Legendre polynomials which can be obtained by combining (22.3.2) and (22.5.24) in \cite{abramowitz}. Recalling that $\frac{ix}{y} = \frac{iQ_{z}}{\sqrt{D}}$ finishes the proof.
	\end{proof}
	We are now ready to prove Theorem~\ref{corollary trace formulas} and Theorem~\ref{theorem trace formulas}.

	\begin{proof}[Proof of Theorem~\ref{theorem trace formulas}] By combining Proposition~\ref{proposition trace formula F1-kD}, Lemma~\ref{lemma local polynomial}, Lemma~\ref{lemma raising constant}, and Lemma~\ref{lemma raising Q}, we obtain the formulas given in Theorem~\ref{theorem trace formulas}.
	\end{proof}
	
	\begin{proof}[Proof of Theorem~{\ref{corollary trace formulas}}] From the functional equation of the Dirichlet $L$-function and its evalutation at negative integers in terms of Bernoulli polynomials $B_{k}(x) \in \Q[x]$ (see \S7 in \cite{zagierbook}), and the well-known evaluation of the Riemann zeta function at even natural numbers in terms of Bernoulli numbers $B_{k} \in \Q$, we obtain the rational number
	\begin{align}\label{eq rational formula ckD}
	c_{k}(D) = -\frac{f^{2k-1}D_{0}^{k-1}\binom{2k}{k}B_{k}}{2^{k-2}(2k-1)B_{2k}}\sum_{\ell = 1}^{D_{0}}\left(\frac{D_{0}}{\ell} \right)B_{k}\left( \frac{\ell}{D_{0}}\right)\sum_{m \mid f}\mu(m)\left(\frac{D_{0}}{m} \right)m^{-k}\sigma_{1-2k}\left(\frac{f}{m}\right).
	\end{align}
	Furthermore, we have that $|z_{\mathcal{A}}|^{2}, x_{\mathcal{A}} \in \Q$ and $y_{\mathcal{A}} \in \sqrt{|d|}\Q$, hence $Q_{z_{\mathcal{A}}} \in \sqrt{|d|}\Q$. This implies that $\sum_{D > 0}a_{F}(-D)\tr_{f_{k,\mathcal{A}}}(D) \in \Q$, finishing the proof of Theorem~\ref{corollary trace formulas}.
	\end{proof}
	
	\section{Numerical evaluation of $\tr_{f_{k,\mathcal{A}}}(D)$}\label{section numerical evaluation}
	
	In order to emphasize the explicit nature of the rational formulas given in Theorem~\ref{theorem trace formulas}, we give some details on their numerical evaluation. The constant $c_{k}(D)$ can be computed using \eqref{eq rational formula ckD}. To evaluate the sum over $Q = [a,b,c] \in \mathcal{Q}_{D}$ appearing in the formulas, recall that $Q_{z_{A}} > 0 > a$ implies that the CM point $z_{A} = x_{A}+iy_{A}$ lies in the interior of the bounded component of $\H \setminus C_{Q}$. This can only happen if $x_{A}$ lies between the two real endpoints of the semi-circle $C_{Q}$ and if $y_{A}$ is smaller than the radius of $C_{Q}$. It is not hard to see that for $a < 0$ this implies the conditions
	\begin{align}\label{eq inequalities}
	2|a|x_{A}-\sqrt{D} \leq b \leq 2|a|x_{A}+\sqrt{D},\qquad |a| \leq \frac{\sqrt{D}}{2y_{A}}, \qquad c:=\frac{D-b^{2}}{4|a|} \in \Z.
	\end{align}
	There are only finitely many integral binary quadratic forms $[a,b,c]$ with $a < 0$ satisfying these conditions. Hence it remains to check whether $Q_{z_{A}} > 0$ for these finitely many quadratic forms.
	
	For example, let $k = 2, \mathcal{A}= [A]$ with $A = [1,1,1]$, and $D = 5$. Then we have $d = -3, z_{A} = \frac{-1+\sqrt{3}}{2} = e^{\frac{2\pi i}{3}},$ and $|\overline{\Gamma}_{z_{\mathcal{A}}}| = 3$. A short calculation gives that $c_{2}(5) = 8$. The conditions \eqref{eq inequalities} are only satisfied by the four quadratic forms $[-1, \pm 1, 1], [-1, \pm 3, -1]$, and only $Q = [-1,-1,1]$ satisfies $Q_{z_{A}} > 0$. Hence the sum in $\tr_{f_{2,[A]}}(5)$ has only one summand, whose value is $[-1,-1,1]_{z_{A}} = \frac{1}{\sqrt{3}}$. Altogether, we obtain $\tr_{f_{2,[A]}}(5) =4$.


\begin{thebibliography}{99}
	\bibitem{abramowitz} M.~Abramowitz and I.~A.~Stegun, {\it Handbook of mathematical functions with formulas, graphs, and mathematical tables}, National Bureau of Standards Applied Mathematics Series {\bf 55} (1964).
	\bibitem{ans} C.~Alfes-Neumann and M. Schwagenscheidt, {\it Shintani theta lifts of harmonic {M}aass forms}, preprint \texttt{arXiv:1712.04491}.
	\bibitem{borcherds} R.~Borcherds, {\it Automorphic forms with singularities on {G}rassmannians}, Invent. Math. {\bf 132} (1998), 491--562.
	\bibitem{bringmannguerzhoykane} K.~Bringmann, P.~Guerzhoy, and B.~Kane, {\it On cycle integrals of weakly holomorphic modular forms}, Math. Proc. Cambridge Philos. Soc. {\bf 158} (2015), 439--449.
	\bibitem{bringmannkanekohnen} K.~Bringmann, B.~Kane, and W.~Kohnen, {\it Locally harmonic {M}aass forms and the kernel of the {S}hintani lift}, Int. Math. Res. Notices {\bf 1} (2015), 3185--3224.
	\bibitem{bringmannkanevonpippich} K.~Bringmann, B.~Kane, and A.~ von Pippich, {\it Regularized inner products of meromorphic modular forms and higher {G}reen's Functions}, to appear in Commun. Contemp. Math. (2018).
	\bibitem{brikavia} K.~Bringmann, B.~Kane, and M.~Viazovska, {\it Theta lifts and local {M}aass forms}, Math. Res. Lett. {\bf 20} (2013), 213--234.
	\bibitem{bringmannkanezwegers} K.~Bringmann, B.~Kane, and S.~Zwegers, {\it On a completed generating function of locally harmonic Maass forms}, Compositio Math. \textbf{150} (2014), 749--762.
	\bibitem{bringmannono} K.~Bringmann and K.~Ono, {\it {A}rithmetic properties of coefficients of half--integral weight {M}aass--{P}oincar\'e series}, Math. Ann. {\bf 337} (2007), 591--612.
	\bibitem{bruinierhabil} J.~Bruinier, {\it Borcherds products on {O}(2, {$l$}) and {C}hern classes of
		{H}eegner divisors}, Lecture Notes in Mathematics {\bf 1780}, Springer-Verlag, Berlin (2002).
	\bibitem{bifl} J.~Bruinier, J.~Funke, \"O.~Imamo\={g}lu, and Y.~Li, {\it Modularity of generating series of winding numbers}, Research in the Math. Sci. {\bf 5} (2018), 23.
	\bibitem{zagier123} J.~Bruinier, G.~van der Geer, G.~Harder, and D.~Zagier, {\it The 1-2-3 of modular forms}, Lectures from the Summer School on Modular Forms and their Applications held in Nordfjordeid, June 2004, Edited by Kristian Ranestad, Springer-Verlag, Berlin (2008).
	\bibitem{ditreal} W.~Duke, \"O.~Imamo\={g}lu, and \'A.~T\'oth, {\it Real quadratic analogues of traces of singular invariants}, Int. Math. Res. Notices {\bf 13} (2011), 3082--3094.
	\bibitem{harveymoore} J.~Harvey, and G.~Moore, {\it Algebras, {BPS} states, and strings}, Nuclear Phys. B {\bf 463} (1996), 315--368.
%	\bibitem{jkk} D.~Jeon, S.-Y. Kang, and C. H. Kim {\it Weak {M}aass--{P}oincar\'e series and weight $3/2$ mock modular forms}, J. Number Theory {\bf 133} (2013), 2567--2587.
	\bibitem{kohnenfouriercoefficients} W.~Kohnen {\it Fourier coefficients of modular forms of half-integral weight}, Math. Ann. {\bf 271} (1985), 237--268.
		\bibitem{kohnenzagierrationalperiods} W.~Kohnen and D.~Zagier, {\it Modular forms with rational periods} in ``Modular forms'', ed. by R.~A.~Rankin, Ellis Horwood, (1985), 197--249.
	\bibitem{niwa} S.~Niwa. {\it Modular forms of half integral weight and the integral of certain theta-functions}, Nagoya Math. J. {\bf 56} (1974), 147--161.
	\bibitem{petersson50} H.~Petersson, {\it {K}onstruktion der {M}odulformen und der zu gewissen {G}renzkreisgruppen geh\"origen automorphen {F}ormen von positiver reeller {D}imension und die vollst\"andige {B}estimmung ihrer {F}ourierkoeffzienten}, S.-B. Heidelberger Akad. Wiss. Math. Nat. Kl. (1950), 415--474.
	\bibitem{petersson} H.~Petersson, {\it {\"U}ber automorphe {O}rthogonalfunktionen und die {K}onstruktion der automorphen {F}ormen von positiver reeller {D}imension}, Math. Ann. {\bf 127} (1954), 33--81.
	\bibitem{shimura} G.~Shimura, {\it On modular forms of half integral weight}, Ann. of Math. (2) {\bf 97} (1973), 440--481.
	\bibitem{shintani} T.~Shintani, {\it On construction of holomorphic cusp forms of half integral weight}, Nagoya Math. J. {\bf 58} (1975), 83--126.
	\bibitem{zagierbook} D.~Zagier, {\it {Z}etafunktionen und quadratische {K}{\"o}rper: Eine {E}inf{\"u}hrung in die h{\"o}here {Z}ahlentheorie}. Hochschultext (Berlin), Springer-Verlag (1981).
	\bibitem{zagierrealquadratic} D.~Zagier, {\it Modular forms associated to real quadratic fields}. Invent. math. (1) {\bf 30} (1975), 1--46.
	\bibitem{zemel} S.~Zemel, {\it Regularized pairings of meromorphic modular forms and theta lifts}, J. Number Theory {\bf 162} (2016), 275--311. 
\end{thebibliography}
\end{document}